\documentclass{article}
\usepackage{amssymb}
\usepackage{amsthm,bbm,mathtools}

\newcommand{\aff}{\mathop{\rm aff}}

\newcommand{\ap}[1]{\prescript{a}{}{#1}}

\newcommand{\one}{\mathbbm 1}

\def\reals{\mathbb{R}}

\def\uball{\mathbb{B}}
\def\ereals{\overline{\mathbb{R}}}

\def\rinte{\mathop{\rm rint}}

\def\comp{\raise 1pt \hbox{$\scriptstyle\circ$}}

\def\argmin{\mathop{\rm argmin}\limits}

\def\minimize{\mathop{\rm minimize}\limits}

\def\st{\mathop{\rm subject\ to}}

\def\dom{\mathop{\rm dom}\nolimits}

\def\upto{{\raise 1pt \hbox{$\scriptstyle \,\nearrow\,$}}}
\def\downto{{\raise 1pt \hbox{$\scriptstyle \,\searrow\,$}}}

\def\cl{\mathop{\rm cl}\nolimits}

\def\epi{\mathop{\rm epi}}

\def\tos{\rightrightarrows}
\def\FF{(\F_t)_{t=0}^T}
\def\one{\mathbbm 1}
\def\ovr{\mathop{\rm over}}
\def\uball{\mathbb{B}}

\def\D{{\cal D}}

\def\F{{\cal F}}
\def\G{{\cal G}}

\def\N{{\cal N}}

\newtheorem{theorem}{Theorem}
\newtheorem{lemma}[theorem]{Lemma}
\newtheorem{corollary}[theorem]{Corollary}

\newtheorem{example}{Example}
\theoremstyle{definition}

\newtheorem{assumption}{Assumption}
\newtheorem{remark}{Remark}

\title{Shadow price of information in discrete time stochastic optimization}
\author{Teemu Pennanen\thanks{Department of Mathematics, King's College London,
Strand, London, WC2R 2LS, United Kingdom} \and 
Ari-Pekka Perkki\"o\thanks{Department of Mathematics, Technische Universit\"at Berlin, Building MA,
Str. des 17. Juni 136, 10623 Berlin, Germany. 
The author is grateful to the Einstein Foundation for the financial support.}}

\begin{document}
\maketitle

\begin{center}
{\bf Dedicated to R. T. Rockafellar on his 80th Birthday}
\end{center}

\begin{abstract}
The shadow price of information has played a central role in stochastic optimization ever since its introduction by Rockafellar and Wets in the mid-seventies. This article studies the concept in an extended formulation of the problem and gives relaxed sufficient conditions for its existence. We allow for general adapted decision strategies, which enables one to establish the existence of solutions and the absence of a duality gap e.g.\ in various problems of financial mathematics where the usual boundedness assumptions fail. As applications, we calculate conjugates and subdifferentials of integral functionals and conditional expectations of normal integrands. We also give a dual form of the general dynamic programming recursion that characterizes shadow prices of information.
\end{abstract}


\section{Introduction}

Let $(\Omega,\F,P)$ be a probability space with a filtration $\FF$ and consider the multistage stochastic optimization problem
\begin{equation}\label{sp}\tag{SP}
\minimize\quad Eh(x)\quad\ovr\ x\in\N,
\end{equation}
where $\N = \{(x_t)_{t=0}^T\,|\, x_t\in L^0(\Omega,\F_t,P;\reals^{n_t})\}$ denotes the space of decision strategies adapted to the filtration, $h$ is a convex normal integrand on $\reals^n\times\Omega$ and $Eh$ denotes the associated integral functional on $L^0(\Omega,\F,P;\reals^n)$. Here and in what follows, $n=n_0+\cdots+n_T$ and $L^0(\Omega,\F,P;\reals^n)$ denotes the linear space of equivalence classes of $\reals^n$-valued $\F$-measurable functions. As usual, two functions are equivalent if they are equal $P$-almost surely. Throughout, we define the expectation of a measurable function as $+\infty$ unless its positive part is integrable.

Problems of the form \eqref{sp} have been extensively studied since their introduction in the mid 70's; see \cite{roc74b,rw74,rw76}. Despite its simple appearance, problem~\eqref{sp} is a very general format of stochastic optimization. Indeed, various pointwise (almost sure) constraints can be incorporated in the objective by assigning $f$ the value $+\infty$ when the constraints are violated. Several examples can be found in the above references. Applications to financial mathematics are given in \cite{pen11c,pp12,pen14}. Our formulation of problem \eqref{sp} extends it's original formulations by allowing for general filtrations $\FF$ as well as general adapted strategies instead of bounded ones. This somewhat technical extension turns out to be quite convenient e.g.\ in financial applications.

We will use the short hand notation $L^\infty:= L^\infty(\Omega,\F,P;\reals^n)$ and define the function $\phi: L^\infty\to\ereals$ by
\begin{align*}
\phi(z) &= \inf_{x\in \N}Eh(x+z).
\end{align*}
{\em We assume throughout that $\phi(0)$ is finite and that $Eh$ is proper on $L^\infty$}. 
Clearly $\phi(0)$ is the optimum value of \eqref{sp} while in general, $\phi(z)$ gives the optimum value that can be achieved in combination with an essentially bounded {\em nonadapted} strategy $z$. Note also that $\phi(z)=\phi(0)$ for all $z\in L^\infty\cap\N$.

The space $L^\infty$ is in separating duality with $L^1:=L^1(\Omega,\F,P;\reals^n)$ under the bilinear form
\[
\langle z,v\rangle :=E(z\cdot v).
\]
A $v\in L^1$ is said to be a {\em shadow price of information} for problem \eqref{sp} if it is a subgradient of $\phi$ at the origin, i.e., if
\[
\phi(z)\ge\phi(0)+\langle z,v\rangle\quad\forall z\in L^\infty.
\]
The following result, the proof of which is given in the appendix, shows that the shadow price of information has the same fundamental properties here as in Rockafellar and Wets~\cite{rw76} where the primal solutions were restricted to be essentially bounded. Here and in what follows, $\phi^*$ denotes the {\em conjugate} of $\phi$ defined for each $v\in L^1$ as
\[
\phi^*(v)=\sup_{z\in L^\infty}\{\langle z,v\rangle - \phi(z)\}.
\]
The {\em annihilator} of $\N^\infty$ will be denoted by $\N^\perp:=\{v\in L^1\,|\,\langle z,v\rangle =0\ \forall z\in\N^\infty\}$.

\begin{theorem}\label{thm:spi0}
We have $\phi^*=Eh^*+\delta_{\N^\perp}$. In particular, $v\in L^1$ is a shadow price of information if and only if it solves the dual problem
\[
\minimize\quad Eh^*(v)\quad\ovr\ v\in\N^\perp
\]
and the optimum value equals $-\phi(0)$. In this case, an $x\in\N$ is optimal if and only if $Eh(x)<0$ and it minimizes the function $x\mapsto h(x,\omega)-v(\omega)\cdot x$ almost surely. 
\end{theorem}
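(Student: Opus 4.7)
The identity $\phi^*=Eh^*+\delta_{\N^\perp}$ is the engine; the characterizations of shadow prices and of optimal primal strategies fall out of it by standard Fenchel duality, so I would aim the bulk of the work at this identity.

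First I would unfold the definition of $\phi^*$ and interchange the $\sup$ and $\inf$:
\[
\phi^*(v)=\sup_{z\in L^\infty}\bigl\{\langle z,v\rangle-\inf_{x\in\N}Eh(x+z)\bigr\}=\sup_{z\in L^\infty,\,x\in\N}\bigl\{\langle z,v\rangle-Eh(x+z)\bigr\}.
\]
From this I would knock out the case $v\notin\N^\perp$ first: pick $x_0\in\N^\infty$ with $\langle x_0,v\rangle\neq 0$, take $z:=\lambda x_0\in L^\infty$, and use the invariance $\phi(\lambda x_0)=\phi(0)$ (which holds because $\N+\lambda x_0=\N$) to drive the supremum to $+\infty$ as $\lambda\to\pm\infty$.

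For $v\in\N^\perp$ I would prove both inequalities. The lower bound $\phi^*(v)\ge Eh^*(v)$ follows by taking $x=0$ in the joint sup and invoking Rockafellar's classical formula for the conjugate of an integral functional on $L^\infty$ with respect to $L^1$, which applies thanks to the standing properness of $Eh$. For $\phi^*(v)\le Eh^*(v)$ I would start from the pointwise Fenchel--Young inequality $h(\omega,x+z)+h^*(\omega,v)\ge(x+z)\cdot v$ and rewrite $\langle z,v\rangle=\langle x+z,v\rangle-\langle x,v\rangle$. Restricted to $x\in\N^\infty$, the last term vanishes by definition of $\N^\perp$ and what remains reduces to $\sup_{y\in L^\infty}\{\langle y,v\rangle-Eh(y)\}=Eh^*(v)$. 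The general case $x\in\N$ would be reached by componentwise truncation $x^k_t:=x_t\one_{\{|x_t|\le k\}}$, which stays adapted and bounded, followed by a monotone/dominated-convergence argument controlled by $v\in L^1$ and the standing convention that an expectation is $+\infty$ whenever its positive part fails to be integrable.

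With the identity in hand, the equivalence of shadow prices and dual optima is immediate: $v\in\partial\phi(0)$ iff $\phi(0)+\phi^*(v)=0$, which, given $\phi^*=Eh^*+\delta_{\N^\perp}$, is exactly the assertion that $v\in\N^\perp$ minimizes $Eh^*$ at value $-\phi(0)$. For the optimality characterization, at a primal-dual optimal pair one has $Eh(x)+Eh^*(v)=0$; this together with the pointwise Fenchel--Young inequality---integrated after re-establishing $\langle x,v\rangle=0$ via the same truncation device---forces $h(\omega,x(\omega))+h^*(\omega,v(\omega))=x(\omega)\cdot v(\omega)$ almost surely, which is exactly the statement that $x(\omega)$ minimizes $x'\mapsto h(x',\omega)-v(\omega)\cdot x'$ a.s.

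\textbf{Main obstacle.} I expect the sticking point to be the upper bound for unbounded $x\in\N$: since $\N\not\subset L^1$, the pairing $\langle x,v\rangle$ is only formally defined and $\N^\perp$-orthogonality is not directly available. The truncation scheme is the natural fix, but its justification needs careful positive/negative-part bookkeeping in concert with the standing integrability convention, and the same technical manoeuvre must be repeated when passing from the integrated Fenchel--Young inequality to its pointwise equality case in the primal-dual optimality characterization.
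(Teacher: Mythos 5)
Your overall architecture coincides with the paper's: unfold $\phi^*$ as a joint supremum, evaluate the inner supremum over $z\in L^\infty$ by the interchange rule (Rockafellar's conjugate formula for integral functionals on the decomposable space $L^\infty$), kill $v\notin\N^\perp$ by scaling along $\N^\infty$ using $\phi(z)=\phi(0)$ for $z\in\N\cap L^\infty$, and then read off the shadow-price equivalence from $v\in\partial\phi(0)\iff\phi(0)+\phi^*(v)=0$ and the optimality characterization from pointwise Fenchel equality. The single genuinely delicate point is the one you flagged yourself: justifying $E(x\cdot v)=0$ (or even just $E(x\cdot v)\ge 0$) for unbounded $x\in\N$ and $v\in\N^\perp\cap\dom Eh^*$. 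There your proposed fix does not work, and this is a real gap rather than bookkeeping.

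The only integrability available is one-sided and \emph{aggregate}: Fenchel--Young with $Eh(x+z)<\infty$, $Eh^*(v)<\infty$ and $z\in L^\infty$ yields an integrable majorant of the total sum $x\cdot v=\sum_t x_t\cdot v_t$, but gives no control whatsoever on the individual terms $x_t\cdot v_t$, which need not be quasi-integrable. The componentwise truncation $x^k_t:=x_t\one_{\{|x_t|\le k\}}$ deletes summands of arbitrary sign, so the truncated sums $x^k\cdot v$ do \emph{not} inherit the majorant (in general $x^k\cdot v\not\le(x\cdot v)^+$), and neither dominated convergence nor reverse Fatou applies; stopping-type truncations fare no better, since partial sums of a martingale transform are not controlled by the terminal sum. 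What is actually needed is the discrete-time fact that a generalized martingale transform ($v\in\N^\perp$ means $E_tv_t=0$, so $x\cdot v$ is such a transform) whose terminal value is majorized by an integrable function has zero expectation --- a nontrivial result proved by backward induction with conditional expectations, not by truncation. The paper does not reprove it; it cites precisely this as \cite[Lemma~1]{per14b} to conclude $E(x\cdot v)=0$ for all $x\in D:=\{x\in\N\mid\exists z\in L^\infty:\ Eh(x+z)<\infty\}$ and $v\in\N^\perp\cap\dom Eh^*$. Note that for $T=0$ your truncation argument does go through (a single term, where the indicator preserves the one-sided bound), but for $T\ge 1$ the gap is genuine, and it recurs verbatim in your optimality step, where ``the same truncation device'' is again invoked to re-establish $\langle x,v\rangle=0$ before passing from the integrated Fenchel identity $Eh(x)+Eh^*(v)=0$ to its pointwise version. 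Repairing the proposal amounts to proving (or citing) that lemma; everything else you wrote matches the paper's proof.
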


The notion of a shadow price of information first appeared in a general single period model in Rockafellar~\cite[Example~6 in Section~10]{roc74} and Rockafellar and Wets~\cite[Section~4]{rw75}. Extension to finite discrete time was given in \cite{rw76}. Continuous-time extensions have been studied in Wets~\cite{wet75}, Back and Pliska~\cite{bp87}, Davis~\cite{dav92} and Davis and Burstein~\cite{db92} under various structural assumptions. The shadow price of information has been found useful in formulating dual problems and deriving optimality condition in general parametric stochastic optimization problems; see e.g.\ \cite{rw78,bp87,bpp}. The shadow price of information is useful also in subdifferential calculus involving conditional expectations; see \cite{rw82} and Section~\ref{ssec:2} below. As a further application, we give a dual formulation of the general dynamic programming recursion from \cite{rw76} and \cite{evs76}; see Section~\ref{ssec:3}.

The main result of this paper gives new generalized sufficient conditions for the existence of a shadow price of information for the discrete time problem~\eqref{sp}. Its proof is obtained by extending the original argument of \cite{rw76} and by relaxing some of the technical assumptions made there. As already noted, we do not require the decision strategies to be essentially bounded. This allows one to establish the existence of solutions and the absence of a duality gap e.g.\ in various problems in financial mathematics; see \cite{pp12,per14b}. We also relax the assumptions made in \cite{rw76} on the normal integrand $h$.

We will denote the {\em adapted projection} of an $x\in L^\infty$ by $\ap x$, that is, $(\ap x)_t=E_tx_t$, where $E_t$ denotes the conditional expectation with respect to $\F_t$. We will also use the notation $x^t:=(x_0,\ldots,x_t)$.

\begin{assumption}\label{dom}
For every $z\in\dom Eh\cap L^\infty$ and every $t=0,\ldots,T$, there exists $\hat z\in\dom Eh\cap L^\infty$ such that $E_t z^t=\hat z^t$.
\end{assumption}

It was assumed in \cite{rw76} (conditions C and D, respectively) that the sets $\dom h(\cdot,\omega)$ are closed, uniformly bounded, and ``nonanticipative'' and that there exists a $\mu\in L^1$ such that $|h(x,\omega)|\le\mu(\omega)$ for all $x\in\dom h(\cdot,\omega)$. The nonanticipativity means the projection mappings $D^t(\omega):=\{x^t\,|\, x\in\dom h(\cdot,\omega)\}$ are $\F_t$-measurable for all $t$. These conditions imply Assumption~\ref{dom}. Indeed, if $z\in\dom Eh\cap L^\infty$, then $z^t\in D^t(\omega)$ almost surely and, by Jensen's inequality, $E_tz^t\in\dom h$ almost surely as well. By the measurable selection theorem (see \cite[Corollary~14.6]{rw98}), there exists a $\hat z\in L^0$ such that $\hat z\in\dom h$ and $\hat z^t=E_tz^t$ almost surely. The uniform boundedness of $\dom h$ implies that $\hat z\in L^\infty$ while the upper bound $\mu$ gives $Eh(\hat z)<\infty$.


We will also use the following.

\begin{assumption}\label{suf}
There exists $\rho\in\reals$ such that, for every $z\in\aff\dom Eh\cap L^\infty$, there exists $x\in\aff\dom Eh\cap\N^\infty$ such that $\|x-z\|\le \rho\|\ap z-z\|$.
\end{assumption}

Assumption~\ref{suf} holds, in particular, if $\ap z\in\aff\dom Eh$ for all $z\in\aff\dom Eh\cap L^\infty$. In the single-step case where $T=0$, this latter condition coincides with Assumption~\ref{dom}. Assumption~\ref{suf} is also implied by the {\em strict feasibility} assumption made in \cite[Theorem~2]{rw76}. Indeed, strict feasibility implies that $\dom Eh$ contains an open ball so that $\aff\dom Eh=L^\infty$. 

In order to clarify the structure and the logic of its proof, we have split our main result in two statements of independent interest, Theorems~\ref{thm:spi} and \ref{thm:drinte} below. Combining them gives the following extension of \cite[Theorem~2]{rw76}.

\begin{theorem}\label{thm:mt}
Let Assumption~\ref{dom} and \ref{suf} hold, and assume that $Eh$ is strongly continuous at a point of $\N^\infty$ relative to $\aff\dom Eh\cap L^\infty$. Then a shadow price of information exists.
\end{theorem}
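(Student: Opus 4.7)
\medskip
\noindent\emph{Proof sketch.}
By Theorem~\ref{thm:spi0}, a shadow price of information exists precisely when the convex function $\phi$ admits a subgradient at the origin lying in $L^1$, so the goal is to produce $v\in L^1\cap\partial\phi(0)$. As the authors announce, the plan is to combine Theorems~\ref{thm:spi} and \ref{thm:drinte}: the former converts a suitable continuity property of $\phi$ into a subgradient in $L^1$ under Assumption~\ref{dom}, while the latter derives that continuity property of $\phi$ from the continuity hypothesis on $Eh$ under Assumption~\ref{suf}. The two theorems sit naturally in series.

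First I would apply Theorem~\ref{thm:drinte} to the hypothesis that $Eh$ is strongly continuous at a point $\bar x\in\N^\infty$ relative to $\aff\dom Eh\cap L^\infty$, in order to infer a corresponding relative continuity for $\phi$ at the origin. Because $\phi(z)=\inf_{x\in\N}Eh(x+z)$ and $\phi$ is invariant under shifts by $\N^\infty$, such an inheritance of continuity is plausible, but the quantitative bound in Assumption~\ref{suf} is precisely what drives the argument: every $z\in\aff\dom Eh\cap L^\infty$ admits an adapted approximant $x\in\aff\dom Eh\cap\N^\infty$ with $\|x-z\|\le\rho\|\ap z-z\|$, which is exactly the control needed to move $z+\bar x$ back into a continuity neighbourhood of $Eh$ by an adapted shift.

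Second, I would feed the continuity of $\phi$ at $0$ thus obtained into Theorem~\ref{thm:spi}, invoking Assumption~\ref{dom} to conclude the existence of a subgradient $v\in L^1$ at the origin. The role of Assumption~\ref{dom} here is to guarantee that $E_tz^t$-truncations of feasible shifts remain feasible; this is the structural compatibility of $\dom Eh$ with conditional expectations that lets the standard integral-functional subdifferential calculus produce an absolutely continuous subgradient.

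The step I expect to be the main obstacle is precisely this last transition. The subdifferential of a convex function on $L^\infty$ generically lives in the full topological dual $(L^\infty)^*$, which contains purely finitely additive components in addition to $L^1$. Excluding these singular parts typically demands either Mackey-type continuity or an argument that exploits the conditional-expectation structure of $Eh$. Assumption~\ref{dom}, built around adapted projections, is designed to furnish exactly this structure, and is what ultimately forces the subgradient of $\phi$ at $0$ into $L^1$ rather than leaving it in the bidual.
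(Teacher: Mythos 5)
Your proposal is correct and takes essentially the same route as the paper, which proves Theorem~\ref{thm:mt} precisely by composing Theorem~\ref{thm:drinte} (Assumption~\ref{suf} plus the relative continuity of $Eh$ yield strong subdifferentiability at the origin) with Theorem~\ref{thm:spi} (Assumption~\ref{dom} then eliminates the singular Yosida--Hewitt components of the subgradient, forcing it into $L^1$). The one imprecision to note is that both intermediate theorems concern the auxiliary function $\tilde\phi(z)=\inf_{x\in\N^\infty}Eh(x+z)$ rather than $\phi$ itself --- the relative-continuity argument works for $\tilde\phi$ because $\dom\tilde\phi=\N^\infty+\dom Eh$, whereas $\dom\phi$ may be strictly larger, and the passage from $\partial\tilde\phi(0)$ to $\partial\phi(0)$ is supplied by Lemma~\ref{lem:cl}.
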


A sufficient condition for the relative continuity will be given in Theorem~\ref{thm:rinte} below. It is obtained by extending the argument in the proof of \cite[Theorem~2]{roc71}.





\section{Existence of a shadow price of information}\label{sec:spi}

Our main results are derived by analyzing the auxiliary value function $\tilde\phi:L^\infty\to\ereals$ defined by
\[
\tilde\phi(z) = \inf_{x\in \N^\infty}Eh(x+z).
\]
Here decision strategies are restricted to be essentially bounded like in \cite{rw76}. Clearly $\tilde\phi\ge\phi$. Our strategy is to establish the existence of a subgradient of $\tilde\phi$ at the origin much like in \cite{rw76}. By the following simple lemma, this will then serve as a shadow price of information for the general problem \eqref{sp}. Following \cite{roc70a}, we denote the {\em biconjugate} of a function $f$ by $\cl f:=f^{**}$.

\begin{lemma}\label{lem:cl}
We have $\cl\tilde\phi=\cl\phi$. If $\partial\tilde\phi(0)$ is nonempty, then $\partial\tilde\phi(0)=\partial\phi(0)$.
\end{lemma}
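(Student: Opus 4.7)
The plan is to reduce both assertions to the single identity $\tilde\phi^*=\phi^*$ on $L^1$; the closure equality $\cl\tilde\phi=\cl\phi$ then follows immediately by biconjugation, and the subdifferential statement follows from the Fenchel--Young relation together with the pointwise bound $\phi\le\tilde\phi$ (which is immediate from $\N^\infty\subseteq\N$).

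First I would compute $\tilde\phi^*$ by direct substitution. Because $\tilde\phi$ restricts the auxiliary variable to $\N^\infty\subseteq L^\infty$, setting $y:=x+z$ turns the defining supremum into a supremum over pairs $(y,x)\in L^\infty\times\N^\infty$, which decouples as
\[
\tilde\phi^*(v)=\sup_{y\in L^\infty}\{\langle y,v\rangle-Eh(y)\}+\sup_{x\in\N^\infty}\{-\langle x,v\rangle\}.
\]
The first summand equals $Eh^*(v)$ by Rockafellar's interchange formula for integral functionals on $L^\infty$, applicable because $Eh$ is assumed proper on $L^\infty$; the second equals $\delta_{\N^\perp}(v)$ since $\N^\infty$ is a linear subspace. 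Theorem~\ref{thm:spi0} provides the corresponding formula $\phi^*=Eh^*+\delta_{\N^\perp}$, so $\tilde\phi^*=\phi^*$, and taking conjugates once more gives $\cl\tilde\phi=\tilde\phi^{**}=\phi^{**}=\cl\phi$.

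For the subdifferential assertion, suppose $v\in\partial\tilde\phi(0)$. Fenchel--Young gives $\tilde\phi(0)+\tilde\phi^*(v)=0$, and combining $\tilde\phi^*=\phi^*$ with $\phi(0)\le\tilde\phi(0)$ and the Fenchel--Young inequality for $\phi$ yields
\[
0=\tilde\phi(0)+\phi^*(v)\ge\phi(0)+\phi^*(v)\ge 0,
\]
forcing equality throughout, so in particular $\phi(0)=\tilde\phi(0)$ and $v\in\partial\phi(0)$. The reverse containment $\partial\phi(0)\subseteq\partial\tilde\phi(0)$ then follows from the same identities read in the opposite direction, since $\phi(0)=\tilde\phi(0)$ is already in hand and $\tilde\phi^*=\phi^*$.

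The main obstacle is the identification $\phi^*=Eh^*+\delta_{\N^\perp}$, which we import from Theorem~\ref{thm:spi0} (proved independently in the appendix). A direct proof bypassing that theorem would amount to showing that enlarging the strategy space from $\N^\infty$ to $\N$ adds no new affine supports below $\phi$, and the natural truncation $x\mapsto x\one_{\{|x|\le n\}}\in\N^\infty$ only gives $Eh(z+x_n)\ge\langle z,v\rangle-\tilde\phi^*(v)$ in the approximating step; passing to the limit to recover the bound at $x$ itself requires control of $Eh$ under pointwise convergence of unbounded arguments, which is the delicate point.
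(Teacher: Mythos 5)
Your proposal is correct and follows essentially the same route as the paper: both compute $\tilde\phi^*=Eh^*+\delta_{\N^\perp}$ via the interchange rule, identify it with $\phi^*$ from Theorem~\ref{thm:spi0}, and deduce the closure equality by biconjugation and the subdifferential equality from $\tilde\phi\ge\phi\ge\cl\phi$ (your Fenchel--Young chain is just an explicit rewriting of the paper's sandwich argument). Your closing remark correctly locates the genuinely delicate point---handling unbounded $x\in\N$ in $\phi^*$---which the paper likewise delegates to the appendix proof of Theorem~\ref{thm:spi0}, where it is resolved via the Fenchel inequality and \cite[Lemma~1]{per14b} rather than by truncation.
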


\begin{proof}
By the interchange rule~\cite[Theorem~14.60]{rw98} again, $\tilde\phi^*=Eh^*+\delta_{\N^\perp}$ (see the proof of Theorem~\ref{thm:spi0}). Thus $\tilde\phi^*=\phi^*$, so $\cl\tilde\phi=\cl\phi$. If $\partial\tilde\phi(0)\ne\emptyset$, then $\tilde\phi(0)=\cl\tilde\phi(0)$ so $\tilde\phi(0)=\phi(0)$ (since we always have $\tilde\phi\ge\phi\ge\cl\phi$), by the first part, so $v\in\partial\tilde\phi(0)$ iff $v\in\partial\phi(0)$.
\end{proof}

The general idea in \cite{rw76} was first to prove the existence of a subgradient for $\tilde\phi$ with respect to the pairing of $L^\infty$ with its Banach dual $(L^\infty)^*$. This was then modified to get a subgradient with respect to the pairing of $L^\infty$ with $L^1\subset(L^\infty)^*$. By \cite{yh52}, any $v\in(L^\infty)^*$ can be expressed as $v=v^a+v^s$ where $v^a\in L^1$ and $v^s\in(L^\infty)^*$ is such that there is a decreasing sequence of sets $A^\nu\in\F$ such that $P(A^\nu)\downto 0$ and 
\[
\langle z,v^s\rangle=0
\]
for any $z\in L^\infty$ that vanishes on $A^\nu$. The representation $v=v^a+v^s$ is known as the {\em Yosida--Hewitt decomposition} of $v$. In order to control the singular component $v^s$, we have introduced Assumption~\ref{dom}.

Below, the {\em strong topology} will refer to the norm topology of $L^\infty$.

\begin{theorem}\label{thm:spi}
Let Assumption~\ref{dom} hold. If $\tilde\phi$ is proper and strongly closed at the origin, then $\phi$ is closed at the origin and $\phi(0)=(\cl\tilde\phi)(0)$. If $\tilde\phi$ is strongly subdifferentiable at the origin, then $\partial\phi(0)=\partial\tilde\phi(0)\neq\emptyset$.
\end{theorem}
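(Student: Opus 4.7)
The strategy follows Rockafellar and Wets~\cite{rw76}: use Hahn-Banach in the Banach space $L^\infty$ to obtain an element of $(L^\infty)^*$ witnessing strong closedness or subdifferentiability of $\tilde\phi$ at $0$, and then apply the Yosida-Hewitt decomposition $v = v^a + v^s$ together with Assumption~\ref{dom} to discard the singular part $v^s$, leaving an element of $L^1$ that plays the same role in the $L^\infty$-$L^1$ pairing. A useful preliminary observation is that $\tilde\phi$ is translation-invariant along $\N^\infty$: substituting $x' = x + y$ in the defining infimum gives $\tilde\phi(z+y) = \tilde\phi(z)$ for $y \in \N^\infty$, so any strong subgradient or continuous affine minorant of $\tilde\phi$ at $0$ must annihilate $\N^\infty$ in the Banach pairing.

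For the strong-subdifferentiability claim, take $v \in (L^\infty)^*$ with $\tilde\phi(z) \geq \tilde\phi(0) + \langle z, v\rangle$ for all $z \in L^\infty$ and decompose $v = v^a + v^s$, where there exists a decreasing sequence $A^\nu \in \F$ with $P(A^\nu) \downto 0$ on which $v^s$ is concentrated. The crux---detailed below---is to show the inequality persists with $v^a$ in place of $v$; this gives $v^a \in \partial\tilde\phi(0) \neq \emptyset$ in the $L^1$-pairing, and Lemma~\ref{lem:cl} then yields $\partial\phi(0) = \partial\tilde\phi(0)$. For the closedness claim, $\cl\phi = \cl\tilde\phi \leq \phi \leq \tilde\phi$ (Lemma~\ref{lem:cl}) reduces the problem to proving $\tilde\phi(0) = \cl\tilde\phi(0)$. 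Strong closedness at $0$ implies $(0, r) \notin \overline{\epi\tilde\phi}^{\|\cdot\|_\infty}$ for every $r < \tilde\phi(0)$; Hahn-Banach separation in $L^\infty \times \reals$ delivers, for each $\epsilon > 0$, a continuous affine minorant $\alpha_\epsilon - \langle \cdot, v_\epsilon\rangle \leq \tilde\phi$ with $v_\epsilon \in (L^\infty)^*$ and $\alpha_\epsilon > \tilde\phi(0) - \epsilon$. The same reduction then produces $v_\epsilon^a \in L^1$ giving $\alpha_\epsilon - \langle \cdot, v_\epsilon^a\rangle \leq \tilde\phi$, so $\cl\tilde\phi(0) \geq \alpha_\epsilon$; sending $\epsilon \downto 0$ yields the equality.

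The main obstacle is thus the reduction $v \mapsto v^a$. The plan, adapted from \cite{rw76}, is to use Assumption~\ref{dom} iteratively over $t = 0, \ldots, T$: given $z \in \dom Eh \cap L^\infty$ and $\nu$, build $\hat z^\nu \in \dom Eh \cap L^\infty$ that agrees with $z$ outside $A^\nu$ and whose $\F_t$-conditional components are matched to prescribed adapted versions. The singular-support property forces $\langle \hat z^\nu - z, v^s\rangle = \langle (\hat z^\nu - z)\one_{A^\nu}, v^s\rangle$, while $\langle \hat z^\nu - z, v^a\rangle \to 0$ by integrability of $v^a$ and $P(A^\nu) \downto 0$. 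Inserting $\hat z^\nu$ into the subgradient inequality, using that $v$ annihilates $\N^\infty$, and letting $\nu \to \infty$ forces $v^s$ to contribute nothing on $\dom Eh \cap L^\infty$; a standard extension via $\tilde\phi \equiv +\infty$ off $\dom\tilde\phi$ and the translation-invariance along $\N^\infty$ then promotes the inequality for $v^a$ to all of $L^\infty$.
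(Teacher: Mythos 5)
Your overall frame (Yosida--Hewitt plus Assumption~\ref{dom} to kill the singular part) matches the paper, but the execution has a genuine gap: the target of your reduction, namely that the subgradient inequality ``persists with $v^a$ in place of $v$'' so that $v^a\in\partial\tilde\phi(0)$, is false in general. By your own preliminary observation, every subgradient of $\tilde\phi$ at the origin must lie in $\N^\perp$; but $v\in\N^\perp$ does not imply $v^a\in\N^\perp$, since for $z\in\N^\infty$ one only has $\langle z,v^a\rangle=-\langle z,v^s\rangle$, which need not vanish. What your truncation argument actually delivers is the paper's Theorem~\ref{thm:esd} (which, note, needs no Assumption~\ref{dom} at all): $Eh(z)\ge Eh(\bar z)+\langle z-\bar z,v^a\rangle-\epsilon$ on $L^\infty$ together with $\langle z-\bar z,v^s\rangle\le\epsilon$ on $\dom Eh$. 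But passing from this inequality for $Eh$ back to one for $\tilde\phi(z)=\inf_{x\in\N^\infty}Eh(x+z)$ requires $\inf_{x\in\N^\infty}\langle x,v^a\rangle>-\infty$, i.e.\ precisely $v^a\perp\N^\infty$; invoking translation invariance along $\N^\infty$ at this point is circular, since that invariance only constrains functionals already known to minorize $\tilde\phi$. A second, related misstep: Assumption~\ref{dom} does not supply a $\hat z^\nu$ that ``agrees with $z$ outside $A^\nu$'' --- it only provides some $\hat z\in\dom Eh\cap L^\infty$ with the matched initial segment $\hat z^t=E_tz^t$, with no localization whatsoever; you have conflated it with the indicator splicing $z^\nu=\one_{A^\nu}\bar z+\one_{\Omega\setminus A^\nu}z$, which is a different construction (and is the one used inside Theorem~\ref{thm:esd}).

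The actual role of Assumption~\ref{dom} in the paper is finer: it converts the singular bound over $\dom Eh$ into $0\ge\sum_{s\le t}\langle E_tz_s-\bar x_s,v^s_s\rangle-\epsilon-\bar\epsilon$, which dualizes to statements about $E_t^*v^s_s$; the identity $E_t^*v_t=0$ (a consequence of $v\in\N^\perp$) then gives $E_t^*v^s_t=-E_tv^a_t$, so the singular part at stage $t$ is \emph{traded for an $L^1$-correction} of the absolutely continuous part, $\tilde v_t=v^a_t-E_tv^a_t$, in a way that keeps $\tilde v\in\N^\perp$. In other words, the final $L^1$ subgradient is \emph{not} $v^a$: the absolutely continuous part must itself be modified. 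Moreover, since $E_t^*v^s_s$ for $s<t$ may reintroduce singular components at earlier stages, the removal is necessarily recursive, backward from $t=T$ to $t=0$, with the tolerance doubling at each stage (whence the harmless factor $2^{T+1}$: $\epsilon>0$ is arbitrary in the closedness case and $\epsilon=0$ in the subdifferentiability case, where exactness survives because $\tilde v$ does not depend on the auxiliary $\bar\epsilon$). Your one-shot $v\mapsto v^a$ shortcut skips exactly this stagewise exchange between singular and absolutely continuous parts, which is where the theorem's content --- and the use of the filtration --- lies.
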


\begin{proof}
By Lemma~\ref{lem:cl}, the first claim holds as soon as $\tilde\phi(0)=\cl\tilde\phi(0)$, while the second holds if $\partial\tilde\phi(0)\ne\emptyset$. Strong closedness of $\tilde\phi$ at the origin means that for every $\epsilon>0$ there is a $v\in(L^\infty)^*$ such that $\tilde\phi(0)\le-\tilde\phi^*(v)+\epsilon$, or equivalently,
\begin{align*}
\tilde\phi(z) &\ge \tilde\phi(0) + \langle z,v\rangle - \epsilon \qquad\forall z\in L^\infty\\
\iff Eh(x+z) &\ge \tilde\phi(0) + \langle z,v\rangle-\epsilon\qquad\forall z\in L^\infty,\ x\in \N^\infty\\
\iff\quad Eh(z) &\ge \tilde\phi(0) + \langle z-x,v\rangle-\epsilon\qquad\forall z\in L^\infty,\ x\in \N^\infty,
\end{align*}
which means that $v\perp \N^\infty$ and
\begin{equation}\label{eq:spi1}
Eh(z) \ge \tilde\phi(0) + \langle z,v\rangle-\epsilon\qquad \forall z\in L^\infty.
\end{equation}
Similarly, $\tilde\phi$ is strongly subdifferentiable at the origin iff $v\perp \N^\infty$ and \eqref{eq:spi1} holds with $\epsilon=0$.

We will prove the existence of a $v\perp\N^\infty$ which has $v^s=0$ and satisfies \eqref{eq:spi1} with $\epsilon$ multiplied with $2^{T+1}$. Similarly to the above, this means that $\phi$ is closed (if \eqref{eq:spi1} holds with all $\epsilon>0$) or subdifferentiable (if $\epsilon=0$) at the origin with respect to the weak topology. The existence will be proved recursively by showing that if $v\perp\N^\infty$ satisfies \eqref{eq:spi1} and $v_s^s=0$ for $s>t$ (this holds for $t=T$ as noted above), then there exists a $\tilde v\perp\N^\infty$ which satisfies \eqref{eq:spi1} with $\epsilon$ multiplied by $2$ and $\tilde v_t^s=0$ for $s\ge t$. 

Thus, assume that $v_s^s=0$ for $s>t$ and let $\bar\epsilon>0$ and $\bar x\in\N^\infty$ be such that $\tilde\phi(0)\ge Eh(\bar x)-\epsilon$. Combined with \eqref{eq:spi1} and noting that $\langle \bar x,v\rangle=0$, we get
\[
Eh(z) \ge Eh(\bar x) + \langle z-\bar x,v\rangle -\epsilon-\bar\epsilon\qquad\forall z\in L^\infty.
\]
Let $z\in\dom Eh\cap L^\infty$ and let $\hat z$ be as in Assumption~\ref{dom}. By Theorem~\ref{thm:esd} in the appendix, 
\begin{equation}
Eh(z)\ge Eh(\bar x)+\langle z-\bar x,v^a\rangle-\epsilon-\bar\epsilon,\label{pa}
\end{equation}
and
\begin{equation}
0\ge\langle\hat z-\bar x,v^s\rangle-\epsilon-\bar\epsilon.\label{pa2}
\end{equation}
Since $\hat z^t=E_tz^t$ and $v^s_s=0$ for $s>t$ by assumption, \eqref{pa2} means that
\[
0\ge\sum_{s=0}^t\langle E_tz_s-\bar x_s,v_s^s\rangle - \epsilon-\bar\epsilon.
\]
Each term in the sum can be written as $\langle z_s-\bar x_s,E_t^*v_s^s\rangle$, where $E_t^*$ denotes the adjoint of $E_t:L^\infty(\Omega,\F,P;\reals^{n_t})\to L^\infty(\Omega,\F,P;\reals^{n_t})$. Moreover, since $v\perp\N^\infty$, we have $E_t^*v_t=0$ so, in the last term, $E_t^*v_t^s=-E_t^*v_t^a=-E_tv_t^a$. 
Thus, combining \eqref{pa2} and \eqref{pa} gives
\[
Eh(z)\ge Eh(\bar x)+\langle z-\bar x,\tilde v\rangle-2\epsilon-2\bar\epsilon,
\]
where 
\[
\tilde v_s=
\begin{cases}
v_s+E_t^*v^s_s & \text{for $s<t$},\\
v^a_s-E_tv^a_s & \text{for $s=t$},\\
v_t & \text{for $s>t$}.
\end{cases}
\]
It is easily checked that we still have $\tilde v\in\N^\perp$ but now $\tilde v^s_s=0$ for every $s\ge t$ as desired. Since $\bar\epsilon>0$ was arbitrary and $\langle\bar x,\tilde v\rangle=0$, we see that $\tilde v$ satisfies \eqref{eq:spi1} with $\epsilon$ multiplied by $2$. This completes the proof since $z\in\dom Eh\cap L^\infty$ was arbitrary.
\end{proof}

The general idea of the above proof is from \cite[Theorem~2]{rw76} where the imposed assumptions guarantee the strong continuity of $\tilde\phi$ at the origin, which in turn guarantees subdifferentiability. The following two results give more general conditions under which the subdifferentiability holds.

\begin{theorem}\label{thm:drinte}
Let Assumption~\ref{suf} hold. If $Eh$ is strongly continuous at a point of $\N^\infty$ relative to $\aff\dom Eh\cap L^\infty$, then $\tilde\phi$ is strongly subdifferentiable at the origin.
\end{theorem}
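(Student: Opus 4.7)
The plan is to show that $\tilde\phi$ is bounded above on a norm-neighborhood of the origin relative to the linear subspace $N:=\aff\dom\tilde\phi\subset L^\infty$, conclude that $\tilde\phi|_N$ is continuous at $0$ (hence subdifferentiable in $N^*$), and extend the subgradient to $(L^\infty)^*$ by Hahn--Banach. By hypothesis there exist $\bar x\in\N^\infty\cap M$ (where $M:=\aff\dom Eh\cap L^\infty$), $\delta>0$ and $c\in\reals$ such that $Eh(y)\le c$ for every $y\in M$ with $\|y-\bar x\|\le\delta$. In particular $\tilde\phi(0)\le Eh(\bar x)\le c<\infty$, and $\tilde\phi(0)\ge\phi(0)>-\infty$, so $\tilde\phi(0)$ is finite; moreover $0\in\dom\tilde\phi\subset N$, and since $\dom\tilde\phi=\dom Eh-\N^\infty$ we get $N=M-\N^\infty$, which is a linear subspace of $L^\infty$.

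The key step is the upper bound. Given $z\in N$, decompose $z=\tilde m-\tilde n$ with $\tilde m\in M$ and $\tilde n\in\N^\infty$, and apply Assumption~\ref{suf} to $\tilde m$ to obtain $x_0\in M\cap\N^\infty$ with
\[
\|x_0-\tilde m\|\le\rho\|\ap\tilde m-\tilde m\|=\rho\|\ap z-z\|\le 2\rho\|z\|,
\]
where the equality uses $\ap\tilde n=\tilde n$ and the last inequality uses that $\ap$ is a contraction on $L^\infty$. Define $x:=\tilde n+\bar x-x_0\in\N^\infty$. Then $x+z=\tilde m-x_0+\bar x$ lies in $M$ (since $\tilde m-x_0\in M-M$ is parallel to $M$ and $\bar x\in M$), and $\|x+z-\bar x\|=\|\tilde m-x_0\|\le 2\rho\|z\|$. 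Consequently, for every $z\in N$ with $\|z\|\le\delta/(2\rho)$, the relative continuity hypothesis yields $Eh(x+z)\le c$, and since $x\in\N^\infty$,
\[
\tilde\phi(z)\le Eh(x+z)\le c.
\]

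With $\tilde\phi(0)$ finite and $\tilde\phi$ bounded above on a neighborhood of $0$ in $N$, the standard convexity argument shows that $\tilde\phi|_N$ is continuous at $0$, hence there is a $v_0\in N^*$ with $\tilde\phi(z)\ge\tilde\phi(0)+\langle z,v_0\rangle$ for all $z\in N$. Extending $v_0$ via Hahn--Banach to $v\in(L^\infty)^*$ preserves the inequality on $N$, while for $z\in L^\infty\setminus N$ one has $\tilde\phi(z)=+\infty$ by the definition of $N$, so the inequality is trivial. This gives $v\in\partial\tilde\phi(0)$, proving strong subdifferentiability. The main obstacle is finding the right target: one must phrase the problem as producing an $x\in\N^\infty$ with $x+z\in M$ within distance $O(\|z\|)$ of $\bar x$, after which Assumption~\ref{suf} plugs in mechanically through the decomposition $z=\tilde m-\tilde n$.
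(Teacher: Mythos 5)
Your proof is correct and follows essentially the same route as the paper's: identify $\dom\tilde\phi=\dom Eh+\N^\infty$, use Assumption~\ref{suf} together with the contraction property of $\ap{z}$ to shift any small $z\in\aff\dom\tilde\phi$ by an element of $\N^\infty$ into the $\delta$-ball around the continuity point within $\aff\dom Eh$, deduce that $\tilde\phi$ is bounded above near the origin relative to its affine hull and hence continuous and subdifferentiable there, and extend the relative subgradient by Hahn--Banach. The only (cosmetic) difference is that the paper normalizes the continuity point to the origin ``without loss of generality,'' whereas you carry $\bar x$ explicitly through the affine combination $\tilde m-x_0+\bar x$, which handles the translation transparently.
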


\begin{proof}
We may assume without loss of generality that there exist $M,\epsilon>0$ such that $Eh(z)\le M$ for all $z\in\aff\dom Eh$ with $\|z\|\le\epsilon$. It is straightforward to check that $\dom\tilde\phi=\N^\infty+\dom Eh$ and $\aff\dom\tilde\phi=\N^\infty+\aff\dom Eh$. Assumption~\ref{suf} implies that if $z\in\aff\dom\tilde\phi$, then $z-x_z\in\aff\dom Eh$ for some $x_z\in\N^\infty$ with $\|z-x_z\|\le \rho\|\ap z-z\|$. Indeed, each $z\in\aff\dom\tilde\phi$ can be expressed as $z=x+w$, where $x\in\N^\infty$ and $w\in\aff\dom Eh$, while Assumption~\ref{suf} gives the existence of a $\tilde x_z\in\aff\dom Eh$ such that $\|\tilde x_z-w\|\le \rho\|\ap w-w\|=\rho\|\ap z-z\|$. Setting $x_z:=\tilde x_z+x$, we have $z-x_z=w-\tilde x_z\in\aff\dom Eh$ and $\|z-x_z\|\le \rho\|\ap z-z\|$ as claimed.

Now, if $z\in\aff\dom\tilde\phi$ is such that $\|z\|\le\epsilon/2\rho$, then $\|z-x_z\|\le\epsilon$, so $\tilde\phi(z)\le Eh(z-x_z)\le M$. Since $\tilde\phi(0)$ is finite by assumption, this implies that $\tilde\phi$ is strongly continuous and thus subdifferentiable on $\aff\dom\tilde\phi$; see \cite[Theorem~11]{roc74}. By the Hahn--Banach theorem, relative subgradients on $\aff\dom\tilde\phi$ can be extended to subgradients on $L^\infty$.
\end{proof}

If $Eh$ is a closed proper and convex with $\aff\dom Eh$ closed, then $ Eh$ is continuous on $\rinte_s\dom Eh$, the {\em relative strong interior} of $\dom Eh$ (recall that the relative interior of a set is defined as its interior with respect to its affine hull). Indeed, $\aff\dom Eh$ is a Banach space whenever it is closed, and then $Eh$ is strongly continuous relative to $\rinte_s\dom Eh$; see e.g.\ \cite[Corollary~8B]{roc74}.

The following result gives sufficient conditions for $\aff\dom Eh$ to be strongly closed and $\rinte_s\dom Eh$ to be nonempty. Its proof, contained in the appendix, is obtained by modifying the proof of \cite[Theorem~2]{roc71} which required that $\aff\dom h=\reals^n$ almost surely. Recall that the set-valued mappings $\omega\mapsto\dom h$ and $\omega\mapsto\aff\dom h$ are measurable; see \cite[Proposition~14.8 and Exercise 14.12]{rw98}.

\begin{theorem}\label{thm:rinte}
Assume that the set
\[
\D=\{x\in L^\infty(\dom h) \mid \exists r>0: \uball (x,r)\cap \aff \dom h\subseteq\dom h\ P\text{-a.e.}\}
\]
is nonempty and contained in $\dom Eh$. Then $Eh:L^\infty\rightarrow\ereals$ is closed proper and convex, $\aff\dom Eh$ is closed and $\rinte_s\dom Eh=\D$. In particular, $Eh$ is strongly continuous throughout $\D$ relative to $\aff\dom Eh\cap L^\infty$.
\end{theorem}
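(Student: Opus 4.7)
The plan is to fix any $x_0\in\D$ with witness radius $r_0>0$ and identify $\aff\dom Eh$ fibrewise with the strongly closed linear subspace
\[
x_0+L,\qquad L:=\{u\in L^\infty\mid u(\omega)\in(\aff\dom h)(\omega)-x_0(\omega)\;P\text{-a.s.}\}.
\]
Once this identification is in place, the relative interior formula $\rinte_s\dom Eh=\D$ drops out of two explicit inclusions, and both continuity and closedness follow from bounding $Eh$ above on a strong neighbourhood of $x_0$ via a fibrewise simplex construction built from orthogonal projections onto the random subspace $(\aff\dom h)(\omega)-x_0(\omega)$.

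For the geometric step, I would first verify that $L$ is strongly closed: the mapping $\omega\mapsto\aff\dom h(\omega)$ is closed-valued and $\F$-measurable by \cite[Proposition~14.8 and Exercise~14.12]{rw98}, and any $\|\cdot\|_\infty$-convergent sequence in $L$ admits an almost surely uniformly convergent subsequence landing in closed fibres. Since every $y\in\dom Eh$ satisfies $y(\omega)\in\dom h(\omega)\subseteq\aff\dom h(\omega)$ a.s., one immediately gets $\dom Eh\subseteq x_0+L$ and hence $\aff\dom Eh\subseteq x_0+L$. Conversely, for $u\in L$ with $\|u\|_\infty<r_0$, the inclusion $(x_0+u)(\omega)\in\uball(x_0(\omega),r_0)\cap(\aff\dom h)(\omega)\subseteq\dom h(\omega)$ together with the residual radius $r_0-\|u\|_\infty$ places $x_0+u$ in $\D\subseteq\dom Eh$; general $u\in L$ is a scalar multiple of one of small norm, so $x_0+u$ is an affine combination of points in $\dom Eh$. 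This yields $\aff\dom Eh=x_0+L$ (in particular strongly closed) and $\uball(x_0,r_0)\cap\aff\dom Eh\subseteq\dom Eh$; applied at every point of $\D$ it gives $\D\subseteq\rinte_s\dom Eh$. For the reverse inclusion, I would argue by contradiction: given $y\in\rinte_s\dom Eh$ with working radius $r>0$, if the set $A:=\{\omega:\uball(y(\omega),r)\cap(\aff\dom h)(\omega)\not\subseteq\dom h(\omega)\}$ had positive probability, a measurable selection $\xi$ on $A$ of the closed-valued mapping $\omega\mapsto[\uball(y(\omega),r)\cap(\aff\dom h)(\omega)]\setminus\dom h(\omega)$ would produce $z:=\xi\one_A+y\one_{A^c}\in\uball(y,r)\cap\aff\dom Eh$ with $h(z,\omega)=+\infty$ on $A$, contradicting $z\in\dom Eh$.

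For continuity, I would fix $r_0'<r_0$ and let $P_\omega$ denote the measurable orthogonal projection onto the random linear subspace $(\aff\dom h)(\omega)-x_0(\omega)$. The vectors $v_i^\pm(\omega):=x_0(\omega)\pm r_0'P_\omega(e_i)$ then satisfy $\|v_i^\pm-x_0\|_\infty\le r_0'<r_0$, so $v_i^\pm\in\D\subseteq\dom Eh$ for each standard basis vector $e_i$, $i=1,\dots,n$. For any $y=x_0+u\in\aff\dom Eh$ with $\|u\|_\infty$ small enough that $\sum_i|u_i(\omega)|\le r_0'$ a.s., the identity $u(\omega)=P_\omega u(\omega)=\sum_iu_i(\omega)P_\omega(e_i)$ writes $y(\omega)$ as a fibrewise convex combination of $x_0(\omega)$ and the $v_i^{\mathrm{sgn}(u_i(\omega))}(\omega)$, and convexity of $h$ yields the pointwise bound
\[
h(y(\omega),\omega)^+\le h(x_0(\omega),\omega)^+ +\sum_{i=1}^n\bigl[h(v_i^+(\omega),\omega)^++h(v_i^-(\omega),\omega)^+\bigr],
\]
whose right-hand side is integrable. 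This uniform upper bound on $Eh$ in a strong neighbourhood of $x_0$ within $\aff\dom Eh$ makes $Eh$ strongly continuous on $\D=\rinte_s\dom Eh$ relative to the Banach subspace $\aff\dom Eh$ by \cite[Corollary~8B]{roc74}, and combined with $Eh\equiv+\infty$ off the closed set $\aff\dom Eh$ and the standard closure operation for convex functions, this makes $Eh$ closed, proper, and convex on all of $L^\infty$.

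The main obstacle is the fibrewise simplex bound: measurability of the projections $P_\omega$ onto the random affine hull and the passage from the pointwise convex-combination inequality to an integrable envelope for $h(y,\omega)^+$ require careful bookkeeping. The measurable-selection contradiction in $\rinte_s\dom Eh\subseteq\D$ and the extension from interior continuity to global closedness of $Eh$ are the other spots demanding attention; the identification $\aff\dom Eh=x_0+L$ is geometrically routine once strong closedness of $L$ is in hand.
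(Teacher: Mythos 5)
Your geometric and continuity work is essentially sound and runs parallel to the first half of the paper's proof: your identification $\aff\dom Eh=x_0+L$ is, after translating so that $0\in\D$, the paper's identity $\aff\dom Eh\cap L^\infty=L^\infty(\aff\dom h)$, and your two inclusions give $\rinte_s\dom Eh=\D$ just as the paper's chain of inclusions does. (One repair needed there: the mapping $\omega\mapsto[\uball(y(\omega),r)\cap\aff\dom h(\cdot,\omega)]\setminus\dom h(\cdot,\omega)$ is generally not closed-valued, since $\dom h$ need not be closed, so you cannot invoke a closed-valued selection theorem; its graph is measurable, however, so an Aumann--von Neumann type selection still works.) Your fibrewise simplex bound via the measurable projections $P_\omega$ is a legitimate direct route to the local upper bound, where the paper instead deduces relative continuity from closedness via \cite[Corollary~8B]{roc74}.

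The genuine gap is your last step: the claim that interior continuity, closedness of $\aff\dom Eh$, and ``the standard closure operation'' make $Eh$ closed and proper. The theorem asserts that the integral functional itself --- whose values are fixed pointwise by integration --- is lower semicontinuous; you cannot replace it by its lsc hull. And a convex function continuous on the strong relative interior of its domain can perfectly well fail to be lsc at boundary points of the domain (on $\reals$: $f=0$ on $(-1,1)$, $f(\pm1)=1$, $f=+\infty$ elsewhere), so nothing you have proved addresses closedness; nor does your one-sided \emph{upper} bound rule out $-\infty$ values of $Eh$, which is what properness requires under the paper's convention for expectations. What is missing is precisely the second half of the paper's proof, for which your proposal has no counterpart: since $0\in\rinte\dom h$ almost surely, the mapping $\Gamma(\omega)=\partial h(0,\omega)\cap\aff\dom h(\cdot,\omega)$ is measurable with nonempty closed convex values; a measurable selection $y$ satisfies $r|y(\omega)|\le\alpha(\omega)-h(0,\omega)$ with $\alpha:=\max_i h(\pi x^i)$ integrable (this is what the simplex points are really for --- your $v_i^\pm$ could serve the same purpose), whence $y\in L^1$, and the subgradient relation gives $h^*(y)=-h(0)$, so $y\in\dom Eh^*\cap L^1$. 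This integrable point of $\dom Eh^*$ supplies the affine minorant $h(x,\omega)\ge y(\omega)\cdot x-h^*(y(\omega),\omega)$ with integrable data, and then \cite[Theorem~2]{roc68} (essentially Fatou's lemma with this minorant) yields that $Eh$ is closed and proper on $L^\infty$. Without constructing such a dual element, the closedness and properness conclusions of the theorem remain unproved in your argument.
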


\begin{remark}
Under the assumptions Theorem~\ref{thm:rinte}, $Eh$ is subdifferentiable throughout $\D$. Indeed, the construction of $y$ in the proof shows that $y\in\partial Eh(x)$, since $y\in\partial h(x)$ almost surely.
\end{remark}

\begin{example}
The extension of the integrability condition of \cite[Theorem~2]{roc71} in Theorem~\ref{thm:rinte} is needed, for example, in problems of the form
\begin{alignat*}{2}
&\minimize\quad& &Eh_0(x)\quad\ovr\ x\in\N^\infty\\
&\st\quad& & Ax=b\quad P\text{-a.s.},
\end{alignat*}
where $h_0$ is a convex normal integrand such that $h_0(x,\cdot)\in L^1$ for every $x\in\reals^n$, $A$ is a measurable matrix and $b$ is a measurable vector of appropriate dimensions such that the problem is feasible. Indeed, this fits the general format of \eqref{sp} with
\[
h(x,\omega)=
\begin{cases}
h_0(x,\omega) & \text{if $A(\omega)x=b(\omega)$},\\
+\infty & \text{otherwise},
\end{cases}
\]
so that $\aff\dom h=\dom h$ and $\D=\{x\in L^\infty\,|\, Ax=b\ P\text{-a.s.}\}=\dom Eh$.
\end{example}

\section{Calculating conjugates and subgradients}\label{sec:app}

This section applies the results of the previous sections to calculate subdifferentials and conjugates of certain integral functionals and conditional expectations of normal integrands.

\subsection{Integral functionals on $\N^\infty$}\label{ssec:1}

Let $f$ be a normal integrand and consider the associated integral functional $Ef$ with respect to the pairing $\langle\N^\infty,\N^1\rangle$. We assume throughout this section that $\dom Ef\cap\N^\infty\ne\emptyset$.

If $x\in\N^\infty$ and $v\in L^1(\partial f(x))$, then $Ef(x') \ge Ef(x) + \langle x'-x,v\rangle$ for all $x'\in\N^\infty$, so 
\begin{align}\label{eq:inc}
\ap L^1(\partial f(x))\subseteq \partial Ef(x).
\end{align}
The following theorem gives sufficient conditions for this to hold as an equality. We will use the convention that the subdifferential of a function at a point is nonempty unless the function is finite at the point.

\begin{theorem}\label{thm:op}
Assume that $x^*\in\N^1$ is such that the function $\tilde\phi_{x^*}:L^\infty\to\ereals$,
\[
\tilde\phi_{x^*}(z) := \inf_{x\in\N^\infty}E[f(x+z)-(x+z)\cdot x^*]
\]
is closed at the origin. Then 
\[
(Ef)^*(x^*)=\inf_{v\in\N^\perp}Ef^*(x^*+v).
\]
If $\tilde\phi_{x*}$ is subdifferentiable at the origin, then the infimum is attained. 
If this holds for every $x^*\in\partial Ef(x)$, then
\begin{align*}
\partial Ef(x)=\ap L^1(\partial f(x)).
\end{align*}
\end{theorem}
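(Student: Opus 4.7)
The plan is to recognize $\tilde\phi_{x^*}$ as the auxiliary value function $\tilde\phi$ of Section~\ref{sec:spi} applied to the tilted integrand $h(x,\omega):=f(x,\omega)-x\cdot x^*(\omega)$, whose conjugate is $h^*(v,\omega)=f^*(v+x^*(\omega),\omega)$. Since $x^*\in\N^1$ pairs with everything in $L^\infty$ and $\dom Ef\cap\N^\infty\ne\emptyset$ by the standing assumption of this section, the interchange-rule computation from the proof of Lemma~\ref{lem:cl} should carry over verbatim to $h$ and give
\[
\tilde\phi_{x^*}^*(v) = Ef^*(x^*+v) + \delta_{\N^\perp}(v),
\]
while the definitions immediately yield $(Ef)^*(x^*)=-\tilde\phi_{x^*}(0)$.

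With these two identities in hand, the first assertion will reduce to unwinding closedness at the origin: $\tilde\phi_{x^*}(0)=\tilde\phi_{x^*}^{**}(0)=-\inf_v\tilde\phi_{x^*}^*(v)$, so $(Ef)^*(x^*)=\inf_{v\in\N^\perp}Ef^*(x^*+v)$. The second assertion will follow from the Fenchel equality characterisation of subgradients: any $v\in\partial\tilde\phi_{x^*}(0)$ must satisfy $\tilde\phi_{x^*}(0)+\tilde\phi_{x^*}^*(v)=0$, forcing $v\in\N^\perp$ and attainment of the infimum on the right-hand side above.

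For the subdifferential identity, I would start with an arbitrary $x^*\in\partial Ef(x)$, use the second assertion to choose $v\in\N^\perp$ with $(Ef)^*(x^*)=Ef^*(x^*+v)$, and then use $\langle x,x^*\rangle=\langle x,x^*+v\rangle$ (valid since $v\in\N^\perp$ and $x\in\N^\infty$) to rewrite the Fenchel equality for $x^*\in\partial Ef(x)$ as
\[
E\bigl[f(x)+f^*(x^*+v)-x\cdot(x^*+v)\bigr]=0.
\]
Fenchel's inequality makes the integrand pointwise nonnegative, so it must vanish almost surely, giving $x^*+v\in\partial f(x)$ a.s. The closing observation is that $v\in\N^\perp$ implies $E_tv_t=0$ for every $t$ (by testing against bounded $\F_t$-measurable functions in the $t$-th slot), hence $\ap v=0$ and $\ap(x^*+v)=x^*$. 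This places $x^*$ in $\ap L^1(\partial f(x))$ and supplies the inclusion opposite to \eqref{eq:inc}.

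The main obstacle I expect is not conceptual but bookkeeping: one must check that the interchange-rule derivation from Lemma~\ref{lem:cl} still applies to the shifted integrand $h$ (needing little beyond $x^*\in\N^1$), and then verify that the two uses of Fenchel equality — once to attain the infimum in the dual representation of $(Ef)^*(x^*)$, once to conclude $x^*+v\in\partial f(x)$ pointwise — fit together cleanly. The critical small fact driving everything is the identity $\ap v=0$ for $v\in\N^\perp$, which is what allows one to absorb an annihilator element into $x^*+v$ without changing the adapted projection and is ultimately the reason the subdifferential of $Ef$ lands in $\ap L^1(\partial f(x))$ rather than in a larger set.
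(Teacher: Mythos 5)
Your proposal reproduces the paper's own proof essentially line by line: the same identification of $\tilde\phi_{x^*}^*(v)=Ef^*(x^*+v)+\delta_{\N^\perp}(v)$ via the interchange-rule computation from Lemma~\ref{lem:cl}, the same unwinding of closedness and subdifferentiability at the origin through $(Ef)^*(x^*)=-\tilde\phi_{x^*}(0)$, and the same Fenchel-equality argument producing $v\in\N^\perp$ with $x^*+v\in\partial f(x)$ almost surely and $\ap{(x^*+v)}=x^*$. The only detail the paper makes explicit that you leave as flagged bookkeeping is the reduction to the case where $\tilde\phi_{x^*}$ is proper, handled by first noting $(Ef)^*(x^*)\le Ef^*(x^*+v)$ for all $v\in\N^\perp$ via Fenchel's inequality, after which the improper case is trivial and the interchange rule applies in the remaining case.
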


\begin{proof}
To prove the conjugate formula, note first that $(Ef)^*(x^*)=-\tilde\phi_{x^*}(0)=-\cl\tilde\phi_{x^*}(0)=\inf_{y}\tilde\phi_{x^*}^*(y)$. By the Fenchel inequality, we always have $(Ef)^*(x^*)\le Ef^*(x^*+v)$ for all $v\in\N^\perp$, so we may assume that $\tilde\phi_{x^*}$ is proper. In this case we have the expression $\tilde\phi_{x^*}^*(y)=Ef^*(x^*+y)+\delta_{\N^\perp}(y)$; see the proof of Lemma~\ref{lem:cl}.

Assume now that $\tilde\phi_{x^*}$ is subdifferentiable at the origin for $x^*\in\partial Ef(x)$. Then the infimum in the expression for $(Ef)^*(x^*)$ is attained and $Ef(x)+(Ef)^*(x^*)=\langle x,x^*\rangle$, so there is a $v\in\N^\perp$ such that $E[f(x)+f^*(x^*+v)]=E[x\cdot(x^*+v)]$, and thus $x^*+v\in\partial f(x)$. Clearly, $x^*=\ap(x^*+v)$. Thus, $\partial Ef(x)\supseteq\ap L^1(\partial f(x))$ while the reverse inclusion is always valid by \eqref{eq:inc}.
\end{proof}

Combining the previous theorem with the results of Section~\ref{sec:spi}, we get global conditions when the subdifferential of $Ef$ coincides with the optional projection of the subdifferential of $Ef$ with respect to the pairing $\langle L^\infty,L^1\rangle$.

\begin{corollary}\label{cor:n}
Let $f$ satisfy Assumptions~\ref{dom} and \ref{suf}. 
 If $Ef$ is strongly continuous at a point of $\N^\infty$ relative to $\aff\dom f\cap L^\infty$, then
\[
(Ef)^*(x^*)=\inf_{v\in\N^\perp}Ef^*(x^*+v)\quad\forall x^*\in\N^1
\]
where the infimum is attained, and
\[
\partial Ef(x) = \ap L^1(\partial f(x)).
\]
\end{corollary}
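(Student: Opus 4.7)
The strategy is to reduce the corollary to an application of Theorem~\ref{thm:op} by running Theorem~\ref{thm:mt} on a suitably perturbed integrand. Fix $x^*\in\N^1$ and set $\tilde f(y,\omega):=f(y,\omega)-y\cdot x^*(\omega)$; since $(y,\omega)\mapsto y\cdot x^*(\omega)$ is a Carath\'eodory function, $\tilde f$ is again a convex normal integrand, and $E\tilde f=Ef-\langle\cdot,x^*\rangle$ on $L^\infty$. In particular $\dom E\tilde f=\dom Ef$ and $\aff\dom E\tilde f=\aff\dom Ef$, so Assumptions~\ref{dom} and \ref{suf} hold for $\tilde f$ whenever they hold for $f$. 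Since $x^*\in L^1$, the linear functional $\langle\cdot,x^*\rangle$ is strongly continuous on $L^\infty$, so $E\tilde f$ inherits from $Ef$ strong continuity at a point of $\N^\infty$ relative to the common affine hull of the effective domain.

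Under these preserved hypotheses, Theorem~\ref{thm:mt} applied to $\tilde f$ produces an $L^1$-subgradient of $\tilde\phi_{x^*}$ at the origin (this is exactly the content of combining Theorems~\ref{thm:drinte} and \ref{thm:spi}). For every $x^*\in\N^1$ with $(Ef)^*(x^*)<\infty$, Theorem~\ref{thm:op} then delivers the conjugate identity with the infimum attained. The remaining case $(Ef)^*(x^*)=+\infty$ is immediate: the Fenchel inequality gives $Ef^*(x^*+v)\ge (Ef)^*(x^*)$ for every $v\in\N^\perp$, so both sides of the claimed identity are $+\infty$.

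For the subdifferential formula, fix $x\in L^\infty$ with $\partial Ef(x)\neq\emptyset$ and pick any $x^*\in\partial Ef(x)$. Then $Ef(x)$ is finite and the Fenchel--Young equality forces $(Ef)^*(x^*)=\langle x,x^*\rangle-Ef(x)$ to be finite as well, so the previous paragraph furnishes the required subdifferentiability of $\tilde\phi_{x^*}$ at the origin. Since this is available for every $x^*\in\partial Ef(x)$, the final clause of Theorem~\ref{thm:op} yields $\partial Ef(x)=\ap L^1(\partial f(x))$.

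The only substantive bookkeeping is verifying that the three hypotheses of Theorem~\ref{thm:mt} survive the perturbation $f\mapsto\tilde f$, and this is immediate because adding the continuous linear functional $\langle\cdot,x^*\rangle$ leaves the effective domain unchanged and only shifts $Ef$ by a strongly continuous term. I do not foresee any genuine obstacle beyond this straightforward passage.
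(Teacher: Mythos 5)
Your proposal is correct and takes essentially the same route as the paper's proof: the paper likewise passes to the perturbed integrand $h(x,\omega):=f(x,\omega)-x\cdot x^*(\omega)$, notes that Assumptions~\ref{dom} and \ref{suf} and the relative strong continuity at a point of $\N^\infty$ transfer unchanged, deduces subdifferentiability of $\tilde\phi_{x^*}$ at the origin from Theorems~\ref{thm:drinte} and \ref{thm:spi}, and concludes via Theorem~\ref{thm:op}, using Fenchel's inequality to see that $x^*\in\partial Ef(x)$ forces $\tilde\phi_{x^*}(0)>-\infty$. Your separate disposal of the case $(Ef)^*(x^*)=+\infty$ via the Fenchel inequality is just the paper's treatment of the case $\tilde\phi_{x^*}(0)=-\infty$ in different clothing, so the two arguments coincide in substance.
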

\begin{proof}
Let $x^*\in\N^1$. Since $\dom Ef\cap\N^\infty\ne\emptyset$, we have $\tilde\phi_{x^*}(0)<\infty$. If $\tilde\phi_{x^*}(0)=-\infty$, then $\tilde\phi_{x^*}$ is trivially closed at the origin. Assume now that $\tilde\phi_{x^*}(0)>-\infty$. The assumed properties of $f$ imply that Assumptions~\ref{dom} and \ref{suf} are satisfied by $h(x,\omega):=f(x,\omega)-x\cdot x^*(\omega)$ and that $Eh$ is continuous at a point of $\N^\infty$ relative to $\aff\dom fh\cap L^\infty$. By Theorem~\ref{thm:drinte} and Theorem~\ref{thm:spi}, $\tilde\phi_{x^*}$ is subdifferentiable at the origin. If $x^*\in\partial(Ef)(x)$, Fenchel's inequality $\tilde\phi_{x^*}(0)\ge E[f(x)-x\cdot x^*]\ge -(Ef)^*(x^*)$ implies $\tilde\phi_{x^*}(0)>-\infty$. The assumptions of Theorem~\ref{thm:op} are thus satisfied.
\end{proof}

Without the assumptions of Corollary~\ref{cor:n}, inclusion \eqref{eq:inc} may be strict. A simple example is given on page~176 of \cite{rw82}.

\begin{remark}
By Theorem~\ref{thm:rinte}, the continuity assumption in Corollary~\ref{cor:n} holds, in particular, if 
\[
\D=\{x\in L^\infty(\dom f) \mid \exists r>0: \uball (x,r)\cap \aff \dom f\subseteq\dom h\ P\text{-a.e.}\}
\]
is nonempty and contained in $\dom Ef$.
\end{remark}


\subsection{Conditional expectation of a normal integrand}\label{ssec:2}

Results of the previous section allow for a simple proof of the interchange rule for subdifferentiation and conditional expectation of a normal integrand. Commutation of the two operations has been extensively studied ever since the introduction of the notion of a conditional expectation of a normal integrand in Bismut~\cite{bis73}; see Rockafellar and Wets~\cite{rw82}, Truffert~\cite{tru91} and the references there in. The results of the previous section allow us to relax some of the continuity assumption made in earlier works.

Given a sub-sigma-algebra $\G\subseteq\F$, the $\G$-{\em conditional expectation} of a normal integrand $f$ is a $\G$-measurable normal integrand $E^\G f$ such that
\[
(E^\G f)(x(\omega),\omega)= E^\G [f(x(\cdot),\cdot)](\omega)\quad P\text{-a.s.}
\]
for all $x\in L^0(\Omega,\G,P;\reals^n)$ such that either $f(x)^+\in L^1$ or $f(x)^-\in L^1$. If $\dom Ef^*\cap L^1(\F)\ne\emptyset$, then the conditional expectation exists and is unique in the sense that if $\tilde f$ is another function with the above property, then $\tilde f(\cdot,\omega)=(E^\G f)(\cdot,\omega)$ almost surely; see e.g.\ \cite[Theorem~2.1.2]{tru91}. 

The {\em $\G$-conditional expectation} of an $\F$-measurable set-valued mapping $S:\Omega\tos\reals^n$ is a $\G$-measurable closed-valued mapping $E^\G S$ such that 
\[
L^1(\G,E^\G S)=\cl\{E^\G v\,|\, v\in L^1(\F,S)\}.
\]
The conditional expectation is well-defined and unique as soon as $S$ admits at least one integrable selection; see Hiai and Umegaki~\cite[Theorem~5.1]{hu77}.

The general form of ``Jensen's inequality'' in the following lemma is from \cite[Corollary 2.1.2]{tru91}. We give a direct proof for completeness.

\begin{lemma}\label{lem:jensen}
If $f$ is a convex normal integrand such that $\dom Ef\cap L^\infty(\G)\ne\emptyset$ and $\dom Ef^*\cap L^1(\F)\ne\emptyset$, then 
\[
(E^\G f)^*(E^\G v)\le E^\G f^*(v)
\]
almost surely for all $v\in L^1(\F)$ and
\[
\partial [E^\G f](x) \supseteq E^\G\partial f(x)
\]
for every $x\in\dom Ef\cap L^0(\G)$.
\end{lemma}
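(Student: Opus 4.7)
The plan is to derive both claims from the pointwise Fenchel--Young inequality combined with the defining property of the conditional normal integrand $E^\G f$. The key identity is that for $\G$-measurable $y$ and $v\in L^1(\F)$ one has $E^\G[y\cdot v]=y\cdot E^\G v$, while (under the integrability hypotheses) $E^\G[f(y,\cdot)]=(E^\G f)(y,\cdot)$ almost surely; together these convert pointwise Fenchel--Young into a conditional form.

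For the first inequality, fix $v\in L^1(\F)$ and $y\in L^0(\G;\reals^n)$. Taking $E^\G$ of $f(y,\omega)+f^*(v(\omega),\omega)\ge y(\omega)\cdot v(\omega)$ yields
\[
(E^\G f)(y,\omega)+E^\G[f^*(v)](\omega)\ge y(\omega)\cdot E^\G v(\omega)
\]
almost surely. To recover $(E^\G f)^*(E^\G v,\omega)$ on the right, I would take a Castaing representation of the effective-domain mapping of the $\G$-measurable normal integrand $E^\G f$, producing a countable family of $\G$-measurable selections $\{y_k\}$ whose values are dense in $\dom(E^\G f)(\cdot,\omega)$ for every $\omega$. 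Applying the above bound to each $y_k$, taking the supremum over $k$ off the countable union of null sets, and using that the conjugate of a proper convex lsc function is determined by its affine minorant defect on any set dense in the relative interior of its domain, yields $(E^\G f)^*(E^\G v,\omega)\le E^\G[f^*(v)](\omega)$ almost surely.

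For the subdifferential inclusion, closedness of $\partial[E^\G f](x,\cdot)$ and the definition of the conditional expectation of a set-valued mapping reduce the task to showing $E^\G w\in\partial[E^\G f](x)$ almost surely for every $w\in L^1(\F,\partial f(x))$. Applying $E^\G$ to the pointwise Fenchel equality $f(x,\omega)+f^*(w,\omega)=x(\omega)\cdot w(\omega)$ (using $x\in L^0(\G)$) gives
\[
(E^\G f)(x,\omega)+E^\G[f^*(w)](\omega)=x(\omega)\cdot E^\G w(\omega).
\]
Combining this with the Jensen inequality $(E^\G f)^*(E^\G w)\le E^\G[f^*(w)]$ just established, together with the Fenchel--Young inequality $(E^\G f)(x)+(E^\G f)^*(E^\G w)\ge x\cdot E^\G w$, forces all three to be equalities, which is precisely the Fenchel equality certifying $E^\G w\in\partial[E^\G f](x)$.

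The main obstacle is the conjugate computation in the first part: for each individual $y$ the conditional Fenchel inequality only holds off a $y$-dependent null set, so the supremum over $y\in\reals^n$ cannot be taken naively, and one needs a Castaing-type countable-selection argument combined with the convex lsc structure of $E^\G f$ to pass from dense $\G$-measurable selections to the full pointwise conjugate. A secondary technicality is ensuring all conditional expectations are well defined in $\ereals$; this is handled by the hypotheses $\dom Ef\cap L^\infty(\G)\ne\emptyset$ and $\dom Ef^*\cap L^1(\F)\ne\emptyset$, which via Fenchel--Young supply integrable one-sided bounds on $f$ and $f^*$ covering every case that arises.
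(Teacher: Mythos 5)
Your proposal is correct in substance but reaches the key conditional Jensen inequality by a genuinely different mechanism than the paper. The paper argues globally and by contradiction: if $(E^\G f)^*(E^\G v)>E^\G f^*(v)$ on a set $A\in\G$ of positive measure (shrunk so that $E[\one_A E^\G f^*(v)]<\infty$), then integrating and applying the interchange rule of Rockafellar--Wets in $L^\infty(A,\G,P;\reals^n)$ identifies $E[\one_A(E^\G f)^*(E^\G v)]$ with $\sup_{x\in L^\infty(\G)}E\one_A[x\cdot E^\G v-(E^\G f)(x)]$, which Fenchel's inequality bounds by $E[\one_A f^*(v)]$, a contradiction. You instead work $\omega$-wise, replacing the uncountable supremum --- whose exceptional null sets depend on the test point --- by countably many $\G$-measurable selections; that is exactly the right fix for the difficulty you name, and it is the standard alternative (Truffert-style) route. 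Your approach buys a pointwise proof that needs no duality theory for integral functionals on $L^\infty(A,\G)$; the paper's buys brevity and sidesteps Castaing machinery entirely. Two refinements to yours: a Castaing representation applies to closed-valued measurable mappings, so you should either take one for $\cl\dom (E^\G f)(\cdot,\omega)$ and then run your dense-in-$\rinte$ argument (which does work, since $(E^\G f)(\cdot,\omega)$ is continuous relative to the relative interior of its domain), or --- cleaner --- take a Castaing representation $(y_k,\alpha_k)$ of the epigraph of $E^\G f$, for which $(E^\G f)^*(v^*,\omega)=\sup_k\{v^*\cdot y_k(\omega)-\alpha_k(\omega)\}$ holds outright and the relative-interior lemma becomes unnecessary. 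The second half of your proof coincides with the paper's (conditional Fenchel equality, the first part, and Fenchel--Young force equality), and your explicit reduction via closedness of $\partial[E^\G f](x,\cdot)$ and the $L^1$-selection definition of $E^\G S$ spells out a step the paper leaves implicit.

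One caveat on what you dismiss as a secondary technicality: the claim that the hypotheses supply integrable one-sided bounds ``covering every case'' is not literally true for unbounded $x\in L^0(\G)$, nor for unbounded selections $y_k$. The minorant $f(x)\ge x\cdot w-f^*(w)$ with $w\in\dom Ef^*\cap L^1(\F)$ fails to be integrable when $x\notin L^\infty$, and without quasi-integrability neither the defining identity $(E^\G f)(x)=E^\G[f(x)]$ nor $E^\G[x\cdot w]=x\cdot E^\G w$ is available. The paper repairs exactly this by localizing to $A^\nu=\{\|x\|\le\nu\}\in\G$, on which all terms become integrable, concluding the subgradient relation almost surely on each $A^\nu$ and letting $\nu\to\infty$; the same truncation must be inserted into both halves of your argument. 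This is a routine fix, not a flaw in the strategy.
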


\begin{proof}
Fenchel's inequality $f^*(v)\ge x\cdot v-f(x)$ and the assumption $\dom Ef\cap L^\infty(\G)\ne\emptyset$ imply that $E^\G f^*(v)$ is well defined for all $v\in L^1(\F)$. To prove the first claim, assume, for contradiction, that there is a $v\in L^1(\F)$ and a set $A\in\G$ with $P(A)>0$ on which the inequality is violated. Passing to a subset of $A$ if necessary, we may assume that $E[\one_AE^\G f^*(v)]<\infty$ and thus,
\[
E[\one_A(E^\G f)^*(E^\G v)]>E[\one_AE^\G f^*(v)] = E[\one_Af^*(v)].
\]
This cannot happen since, by Fenchel's inequality
\begin{align*}
E[\one_Af^*(v)] &\ge \sup_{x\in L^\infty(\G)}E\one_A[x\cdot E^\G v - (E^\G f)(x)]=E[\one_A(E^\G f)^*(E^\G v)],
\end{align*}
where the equality follows by applying the interchange rule in $L^\infty(A,\G,P;\reals^n)$.

Given $v\in L^1(\F,\partial f(x))$, we have
\[
f(x)+f^*(v)=x\cdot v
\]
almost surely. Let $A^\nu=\{\|x\|\le \nu\}$ so that $\one_{A^\nu} x$ is bounded. Since $\dom Ef^*\cap L^1(\F)\ne\emptyset$, Fenchel inequality implies that $\one_{A^\nu}f(x)$ integrable. Taking conditional expectations,
\[
\one_{A^\nu}E^\G f(x)+ \one_{A^\nu}E^\G f^*(v)=\one_{A^\nu}x\cdot E^\G v,
\]
so by the first part,
\[
\one_{A^\nu}(E^\G f)(x)+ \one_{A^\nu}(E^\G f)^*(E^\G v)\le \one_{A^\nu}x\cdot E^\G v,
\]
which means that $E^\G v\in\partial(E^\G f)(x)$ almost surely on $A^\nu$. This finishes the proof since $\nu$ was arbitrary. 
\end{proof}

\begin{remark}\label{rem:jensen}
If in Lemma~\ref{lem:jensen}, $f$ is normal $\G$-integrand, then the inequality can be written in the more familiar form $f^*(E^\G v)\le E^\G f^*(v)$. 
\end{remark}

The following gives conditions for the equalities in Lemma~\ref{lem:jensen} to hold. 

\begin{theorem}\label{thm:int}
Let $f$ be a convex normal integrand such that $\dom Ef\cap L^\infty(\G)\ne\emptyset$ and $\dom Ef^*\cap L^1(\F)\ne\emptyset$. If $x^*\in L^1(\G)$ is such that the function $\tilde\phi: L^\infty\to\ereals$,
\[
\tilde\phi(z)=\inf_{x\in L^\infty(\G)}E[f(x+z)-(x+z)\cdot x^*]
\]
is subdifferentiable at the origin, then there is a $v\in L^1(\F)$ such that $E^\G v=0$ and 
\[
(E^\G f)^*(x^*)=E^\G f^*(x^*+v).
\]
If $x\in\dom Ef\cap L^0(\G)$ and the above holds for every $x^*\in L^1(\G;\partial E^\G f(x))$, then
\[
\partial [E^\G f](x) = E^\G\partial f(x).
\]
\end{theorem}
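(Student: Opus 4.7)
The plan is to recognize the problem as the two-stage instance of Theorem~\ref{thm:op} with $\N^\infty = L^\infty(\G)$, so that $\N^\perp = \{v \in L^1(\F) : E^\G v = 0\}$, and then use Lemma~\ref{lem:jensen} to convert the resulting integral identity into a pointwise one that can be combined with the subgradient equality at $x$.

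For the first statement, the interchange rule applied to the $\G$-measurable normal integrand $E^\G f$, together with $E^\G f(x) = (E^\G f)(x)$ for $x \in L^0(\G)$, shows that
\[
-\tilde\phi(0) = (Ef)^*(x^*) = \sup_{x\in L^\infty(\G)} E\bigl[x\cdot x^* - (E^\G f)(x)\bigr] = E[(E^\G f)^*(x^*)],
\]
where the conjugate of $Ef$ on the left refers to the pairing $\langle L^\infty(\G),L^1(\G)\rangle$. Since $\tilde\phi$ is subdifferentiable at the origin by assumption, Theorem~\ref{thm:op} furnishes a $v \in L^1(\F)$ with $E^\G v = 0$ and $(Ef)^*(x^*) = Ef^*(x^*+v) = E[E^\G f^*(x^*+v)]$. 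Combining the two displays gives $E[(E^\G f)^*(x^*)] = E[E^\G f^*(x^*+v)]$, while Lemma~\ref{lem:jensen} supplies the pointwise Jensen bound $(E^\G f)^*(x^*) = (E^\G f)^*(E^\G(x^*+v)) \le E^\G f^*(x^*+v)$. Equality of integrals therefore upgrades to almost-sure equality, which is the first claim.

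For the subdifferential identity, the inclusion $E^\G\partial f(x)\subseteq \partial[E^\G f](x)$ is exactly Lemma~\ref{lem:jensen}. For the reverse inclusion, let $x^*\in L^1(\G;\partial[E^\G f](x))$; by hypothesis there exists $v\in L^1(\F)$ with $E^\G v=0$ and $(E^\G f)^*(x^*) = E^\G f^*(x^*+v)$ almost surely. Since $x\in L^0(\G)$, $(E^\G f)(x)=E^\G f(x)$, and the subgradient identity at $x$ combined with $E^\G v=0$ yields
\[
E^\G\bigl[f(x)+f^*(x^*+v)-x\cdot(x^*+v)\bigr]=0 \quad\text{a.s.}
\]
As the bracketed expression is nonnegative pointwise by Fenchel's inequality, it vanishes almost surely, i.e., $x^*+v\in\partial f(x)$ a.s.\ with $E^\G(x^*+v)=x^*$. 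Hence $L^1(\G;\partial[E^\G f](x))\subseteq\{E^\G u : u\in L^1(\F;\partial f(x))\}\subseteq L^1(\G;E^\G\partial f(x))$; together with the reverse inclusion, the $L^1(\G)$-selections of the two $\G$-measurable closed-valued mappings coincide, and the Hiai--Umegaki characterization \cite{hu77} lifts this to almost-sure equality of the mappings.

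The main obstacle is bridging the two distinct conjugates: the one in Theorem~\ref{thm:op} is taken with respect to $\langle L^\infty(\G),L^1(\G)\rangle$, whereas its right-hand side naturally lives in $\F$-integrability; it is precisely the conditional Jensen inequality of Lemma~\ref{lem:jensen} that promotes the $\F$-level identity $(Ef)^*(x^*)=Ef^*(x^*+v)$ into the $\G$-level almost-sure equality demanded by the theorem, and the same pointwise-versus-integral dichotomy then drives the subdifferential step through Fenchel's inequality.
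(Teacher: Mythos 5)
Your proposal is correct and follows essentially the same route as the paper's proof: apply Theorem~\ref{thm:op} with $T=0$ and $\F_0=\G$ to obtain $v\in\N^\perp=\{v\in L^1(\F):E^\G v=0\}$ with $(Ef)^*(x^*)=Ef^*(x^*+v)$, identify $(Ef)^*(x^*)=E[(E^\G f)^*(x^*)]$, upgrade the integral equality to an almost-sure one via the Jensen bound of Lemma~\ref{lem:jensen}, and then extract $x^*+v\in\partial f(x)$ a.s.\ from the vanishing Fenchel gap. The only (harmless) deviations are that you run the Fenchel-gap argument at the conditional-expectation level where the paper takes full expectations, and that you make explicit the Hiai--Umegaki lifting from equality of $L^1(\G)$-selections to almost-sure equality of the set-valued mappings, which the paper leaves implicit.
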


\begin{proof}
Applying Theorem~\ref{thm:op} with $T=0$ and $\F_0=\G$ gives the existence of a $v\in L^1$ such that $E^\G v=0$ and
\[
(Ef)^*(x^*)=Ef^*(x^*+v).
\]
On the other hand, $Ef=E(E^{\G}f)$ by definition, so $(Ef)^*(x^*)=E(E^\G f)^*(x^*)$, by \cite[Theorem~2]{roc68}. The first claim now follows from the fact that $E^\G f^*(x^*+v)\ge (E^\G f)^*(x^*)$ almost surely, by Lemma~\ref{lem:jensen}.

If $x^*\in L^1(\G;\partial E^\G f(x))$, we have
\[
(E^\G f)(x)+(E^\G f)^*(x^*) = x\cdot x^*\quad P\text{-a.s.}
\]
By the first part, there is a $v\in L^1(\F)$ such that $E^\G v=0$ and
\[
(E^\G f)(x)+E^\G f^*(x^*+v) = x\cdot x^*\quad P\text{-a.s.}
\]
It follows that
\[
E[f(x)+f^*(x^*+v) - x\cdot(x^*+v)] = 0,
\]
which by the Fenchel inequality, implies $x^*+v\in\partial f(x)$ so $\partial [E^\G f](x)\subseteq E^\G\partial f(x)$. Combining this with Lemma~\ref{lem:jensen} completes the proof.
\end{proof}

Sufficient conditions for the subdifferentiability condition are again obtained from Theorems~\ref{thm:drinte} and \ref{thm:rinte}.

\begin{corollary}\label{cor:ce}
Let $f$ be a convex normal integrand such that $\dom Ef^*\cap L^1(\F)\ne\emptyset$, $E^\G x\in\dom Ef$ for all $x\in\dom Ef\cap L^\infty$ and $Ef$ is strongly continuous at a point of $L^\infty(\G)$ relative to $\aff\dom Ef\cap L^\infty$. Then for every $x^*\in L^1(\G)$ there is a $v\in L^1(\F)$ such that $E^\G v=0$ and 
\[
(E^\G f)^*(x^*)=E^\G f^*(x^*+v).
\]
Moreover,
\[
\partial [E^\G f](x) = E^\G\partial f(x).
\]
for every $x\in\dom Ef\cap L^0(\G)$.
\end{corollary}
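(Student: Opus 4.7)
The plan is to reduce Corollary~\ref{cor:ce} to Theorem~\ref{thm:int} by verifying, for each $x^*\in L^1(\G)$, that the auxiliary function
\[
\tilde\phi(z)=\inf_{x\in L^\infty(\G)}E[f(x+z)-(x+z)\cdot x^*]
\]
is subdifferentiable at the origin. I would obtain this subdifferentiability by applying Theorems~\ref{thm:drinte} and \ref{thm:spi} to the single-period instance of the framework of Section~\ref{sec:spi} with $T=0$, $\F_0=\G$, $\N^\infty=L^\infty(\G)$, and normal integrand $h(x,\omega):=f(x,\omega)-x\cdot x^*(\omega)$. Since $x^*\in L^1$, the linear form $z\mapsto\langle z,x^*\rangle$ is strongly continuous on $L^\infty$, so $\dom Eh=\dom Ef$, $\aff\dom Eh=\aff\dom Ef$, and $Eh$ inherits from $Ef$ the strong continuity at a point of $\N^\infty$ relative to $\aff\dom Eh\cap L^\infty$. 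In particular, that point witnesses $\dom Ef\cap L^\infty(\G)\ne\emptyset$, which is one of the standing hypotheses of Theorem~\ref{thm:int}; the other, $\dom Ef^*\cap L^1(\F)\ne\emptyset$, is assumed directly.

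Next I would check that $h$ satisfies Assumptions~\ref{dom} and \ref{suf}. For Assumption~\ref{dom} with $T=0$, given $z\in\dom Eh\cap L^\infty$, take $\hat z:=E^\G z$; then $\hat z\in L^\infty$ because $|E^\G z|\le\|z\|_\infty$, and $\hat z\in\dom Ef$ by the hypothesis $E^\G x\in\dom Ef$ for every $x\in\dom Ef\cap L^\infty$. For Assumption~\ref{suf} with $\rho=1$, given $z\in\aff\dom Ef\cap L^\infty$, write $z=\sum_i\lambda_iz_i$ as a finite affine combination with $z_i\in\dom Ef\cap L^\infty$; by linearity of $E^\G$ and the same hypothesis, $E^\G z=\sum_i\lambda_i E^\G z_i\in\aff\dom Ef$. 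Setting $x:=E^\G z\in\aff\dom Eh\cap L^\infty(\G)$ and recalling that $\ap z=E^\G z$ when $T=0$, we get $\|x-z\|=\|\ap z-z\|$, as required.

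With these verifications and the transferred continuity property, Theorem~\ref{thm:drinte} yields strong subdifferentiability of $\tilde\phi$ at the origin and Theorem~\ref{thm:spi} upgrades this to $\partial\tilde\phi(0)\ne\emptyset$ in the pairing $\langle L^\infty,L^1\rangle$. The first part of Theorem~\ref{thm:int} then produces $v\in L^1(\F)$ with $E^\G v=0$ and $(E^\G f)^*(x^*)=E^\G f^*(x^*+v)$. Since the subdifferentiability was established for arbitrary $x^*\in L^1(\G)$, it holds in particular for every $x^*\in L^1(\G;\partial E^\G f(x))$, so the second part of Theorem~\ref{thm:int} yields $\partial[E^\G f](x)=E^\G\partial f(x)$ for any $x\in\dom Ef\cap L^0(\G)$. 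The step that requires the most care is the verification of Assumption~\ref{suf}: one has to justify the passage from the affine-hull representation of $z$ in $L^\infty$ through $E^\G$ using only the hypothesis on $\dom Ef$, and in particular that the resulting point $E^\G z$ is still in $\aff\dom Ef$ rather than merely in some weak closure. Once this observation is in place, the rest of the argument is an essentially routine composition of the results of Sections~\ref{sec:spi} and \ref{sec:app}.
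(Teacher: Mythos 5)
Your proof is correct and is essentially the paper's own argument: the paper proves Corollary~\ref{cor:ce} ``analogously to Corollary~\ref{cor:n}'' with $T=0$, $\F_0=\G$, $\N^\infty=L^\infty(\G)$ and $h(x,\omega)=f(x,\omega)-x\cdot x^*(\omega)$, where the hypothesis $E^\G x\in\dom Ef$ is precisely Assumption~\ref{dom} and, by the remark following Assumption~\ref{suf}, yields Assumption~\ref{suf} with $\rho=1$ through exactly your affine-combination observation ($\ap z=E^\G z$ when $T=0$), after which Theorems~\ref{thm:drinte}, \ref{thm:spi} and \ref{thm:int} are chained just as you do. The only point you pass over, handled explicitly in the template proof of Corollary~\ref{cor:n}, is the degenerate case $\tilde\phi(0)=-\infty$, which can occur for arbitrary $x^*\in L^1(\G)$ and falls outside the standing finiteness assumption under which Theorem~\ref{thm:drinte} operates; there one argues separately (via trivial closedness, or the paper's convention that the subdifferential is nonempty at points where the function is not finite), and your argument settles the remaining, substantive case.
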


\begin{proof}
Analogously to Corollary~\ref{cor:n}, the additional conditions guarantee the subdifferentiability condition in Theorem~\ref{thm:int}; see the remars after Assumption~\ref{suf}.
\end{proof}

The above subdifferential formula was obtained in \cite{rw82} while the expression for the conjugate was given in \cite[Corollary~2.2.3]{tru91}. Both assumed the stronger condition that $Ef$ be continuous at a point $x\in L^\infty(\G)$ relative to all of $L^\infty(\F)$. A more abstract condition (not requiring the relative continuity assumed here) for the subdifferential formula is given in the corollary in Section~2.2.2 of \cite{tru91}.

Let $g$ be a convex normal integrand. The $\G$-conditional expectation of the epigraphical mapping $\epi g$ is also an epigraphical mapping of some normal integrand as soon as $\epi g$ has an integrable selection; see \cite[p. 136 and 140]{tru91}. We denote by $^\G g$ the normal integrand whose epigraphical mapping is the $\G$-conditional expectation of the epigraphical mapping of $g$. We get from \cite[Theorem 2.1.2 and Corollary 2.1.1.1]{tru91} that
\begin{equation}\label{eq:depi}
({^\G g})^*= E^\G (g^*)
\end{equation}
whenever there exists $y\in \dom Eg\cap L^1$ and $x\in \dom Eg^*\cap L^0(\G)$. Thus results of this section concerning with $(E^\G f)^*$ can be expressed as well in terms ${^\G (}f^*)$.

\subsection{Dynamic programming}\label{ssec:3}

Consider again problem \eqref{sp} and define extended real-valued functions $h_t,\tilde h_t:\reals^{n_1+\dots+n_t}\times\Omega\rightarrow\ereals$ by the recursion
\begin{equation}\label{dp}
\begin{split}
\tilde h_T&=h,\\
h_t &= E_t\tilde h_t,\\
\tilde h_{t-1}(x^{t-1},\omega)&=\inf_{x_t\in\reals^{n_t}}h_t(x^{t-1},x_t,\omega).
\end{split}
\end{equation}
This far reaching generalization of the classical dynamic programming recursion for control systems was introduced in \cite{rw76} and \cite{evs76}. The following result from \cite{pp12} relaxes the compactness assumptions made in \cite{rw76} and \cite{evs76}. In the context of financial mathematics, this allows for various extensions of certain fundamental results in financial mathematics; see \cite{pp12} for details.

\begin{theorem}[\cite{pp12}]\label{thm:dp}
Assume that $h\ge m$ for an $m\in L^1$ and that 
\[
\{x\in\N\,|\,h^\infty(x)\le 0\ P\text{-a.s.}\}
\]
is a linear space. The functions $h_t$ are then well-defined normal integrands and we have for every $x\in\N$ that
\begin{equation}\label{ie}
Eh_t(x^t)\ge \phi(0)\quad t=0,\ldots,T.
\end{equation}
Optimal solutions $x\in\N$ exist and they are characterized by the condition
\[
x_t(\omega)\in\argmin_{x_t}h_t(x^{t-1}(\omega),x_t,\omega)\quad P\text{-a.s.}\quad t=0,\ldots,T.
\]
which is equivalent to having equalities in \eqref{ie}.
\end{theorem}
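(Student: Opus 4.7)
The plan is to prove all three claims simultaneously by backward induction from $t=T$, establishing that each $h_t$ is a well-defined proper convex normal integrand and that the infimum defining $\tilde h_{t-1}$ is attained pointwise, while a suitable inheritance of the linear-space recession condition is maintained. The base case $\tilde h_T=h$ is a normal integrand by assumption and is bounded below by $m\in L^1$. For the inductive step, $h_t=E_t\tilde h_t$ is a normal integrand by the standard theory of conditional expectation of a normal integrand used in Section~\ref{ssec:2}, while $\tilde h_{t-1}(x^{t-1},\omega)=\inf_{x_t}h_t(x^{t-1},x_t,\omega)$ is an infimal projection. The hypothesis that $\{x\in\N\mid h^\infty(x)\le 0\ P\text{-a.s.}\}$ is a linear space is precisely what rules out any one-sided recession direction for $h_t(x^{t-1},\cdot,\omega)$, so the infimum is attained along a closed affine subspace and $\tilde h_{t-1}$ is a proper normal integrand carrying the analogous recession condition one step down.

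Once the $h_t$ are in hand, I would prove the inequality $Eh_t(x^t)\ge\phi(0)$ by constructing an adapted extension $\bar x=(x^t,\bar x_{t+1},\ldots,\bar x_T)\in\N$ of the given adapted $x^t$ that satisfies $Eh(\bar x)=Eh_t(x^t)$. The extension is built recursively by measurable selection from the $\F_s$-measurable mapping $\omega\mapsto\argmin_y h_s(\bar x^{s-1}(\omega),y,\omega)$, which is nonempty by the recession argument. Along the selection, $h_s(\bar x^s,\omega)=\tilde h_{s-1}(\bar x^{s-1},\omega)$ almost surely, while the identity $h_s=E_s\tilde h_s$ combined with the $\F_s$-adaptedness of $\bar x^s$ yields $Eh_s(\bar x^s)=E\tilde h_s(\bar x^s)$. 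Chaining these equalities telescopes into
\[
Eh_t(x^t)=E\tilde h_t(x^t)=Eh_{t+1}(x^t,\bar x_{t+1})=\cdots=Eh(\bar x)\ge\phi(0).
\]

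For the characterization of optima, the inequality $\tilde h_{s-1}(x^{s-1})\le h_s(x^s)$, which is immediate from the definition of infimum, gives for any adapted $x$ the non-decreasing chain $Eh_0(x_0)\le Eh_1(x^1)\le\cdots\le Eh_T(x^T)=Eh(x)$. Combined with the just-proved lower bound $Eh_s(x^s)\ge\phi(0)$, an $x\in\N$ is optimal if and only if all inequalities in the chain collapse, which is in turn equivalent to $x_s(\omega)\in\argmin_y h_s(x^{s-1}(\omega),y,\omega)$ almost surely for every $s$. Existence of such an $x$ is then obtained by iterated measurable selection, starting from an $\F_0$-measurable $x_0\in\argmin h_0(\cdot,\omega)$ and propagating forward.

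The main obstacle is the backward-induction step itself: infimal projection can destroy closedness and conditional expectation can create null-set pathologies, so the normality and recession bookkeeping must be executed with care. The argument must exploit both that the recession set is a \emph{linear subspace} (so that any descent direction is matched by its negative and can be factored out, yielding attainment in the inner infimum) and the integrable lower bound $m\in L^1$ (to prevent the conditional expectation from producing $-\infty$ values on otherwise negligible events). These are the two ingredients that replace the compactness used in \cite{rw76} and \cite{evs76}.
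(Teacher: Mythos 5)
Your overall architecture is exactly that of the proof in \cite{pp12} (which this paper cites rather than reproves): backward induction propagating the integrable lower bounds $m_T:=E_Tm$, $m_{t-1}:=E_{t-1}m_t$ (which survive both the infimal projection and the conditional expectation, so each $E_t\tilde h_t$ is well defined); the identity $Eh_t(x^t)=E\tilde h_t(x^t)$ for adapted $x$ from the defining property of the conditional integrand; the extension of a given $x^t$ by measurable argmin selections to telescope $Eh_t(x^t)=Eh(\bar x)\ge\phi(0)$, which proves \eqref{ie}; and the characterization of optima by collapse of the chain $Eh_0(x_0)\le\cdots\le Eh_T(x^T)=Eh(x)$, with existence by forward selection. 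These parts are correct as sketched; note that the passage from equality of expectations to the almost-sure argmin condition uses the standing assumption that $\phi(0)$ is finite, which forces all members of the chain to equal $\phi(0)$.

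The genuine gap is the step you assert in one clause: that the linearity of $\{x\in\N\mid h^\infty(x)\le 0\ P\text{-a.s.}\}$ ``is precisely what rules out any one-sided recession direction for $h_t(x^{t-1},\cdot,\omega)$''. The hypothesis is a statement about \emph{adapted processes} on all of $\Omega$, whereas the $\omega$-wise attainment and closedness criterion (\cite[Theorem~9.2]{roc70a}, applied pointwise to the projection $(x^{t-1},x_t)\mapsto x^{t-1}$) requires the \emph{pointwise} sets $N_t(\omega)=\{x_t\mid h_t^\infty(0,\dots,0,x_t,\omega)\le 0\}$ to be linear almost surely; a single measurable selection of $N_t$ is not an element of $\N$, so the hypothesis cannot be invoked directly. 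Closing this gap requires a parallel recursion at the recession level, interleaved with your induction: (i) the identity $h_t^\infty=E_t(\tilde h_t^\infty)$, whose nontrivial direction is a conditional monotone-convergence argument for the increasing difference quotients; (ii) $\tilde h_{t-1}^\infty(x^{t-1},\omega)=\inf_{x_t}h_t^\infty(x^{t-1},x_t,\omega)$ with attainment, which is only available once stage-$t$ attainment is already secured; and (iii) the observation — absent from your sketch — that $h\ge m$ forces $h^\infty\ge 0$ and hence $h_t^\infty,\tilde h_t^\infty\ge 0$, which is what converts the conditional bound $E_t[\tilde h_t^\infty(0,x_t)]\le 0$ into the pointwise statement $\tilde h_t^\infty(0,x_t)=0$ almost surely. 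Only then can a selection $x_t$ of $N_t$ be extended by recession-level argmin selections to an adapted $w$ with $h^\infty(w)\le 0$ a.s., the linearity hypothesis applied to give $h^\infty(-w)\le 0$, the inequalities descended to get $-x_t\in N_t$ a.s., and a Castaing-representation argument used to conclude that the closed convex cone $N_t(\omega)$ is a linear space a.s. In particular, the lower bound $m\in L^1$ is not merely there ``to prevent the conditional expectation from producing $-\infty$ values''; its second, equally essential role is to make all recession functions nonnegative so that the conditional-expectation step localizes. As written, your induction would stall at the first infimal projection.
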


Consider now the dual problem
\[
\minimize\quad Eh^*(v)\quad\ovr\ v\in\N^\perp
\]
from Theorem~\ref{thm:spi0}. We know that the optimum dual value is at least $-\phi(0)$ and that if the values are equal, the shadow prices of information are exactly the dual solutions. Note also that when the functions $h_t$ and $\tilde h_t$ in the dynamic programming equations are well-defined, their conjugates solve the {\em dual dynamic programming} equations
\begin{equation}\label{ddp}
\begin{split}
\tilde g_T&=h^*,\\
g_t &= {^{\F_t}}\tilde g_t,\\
\tilde g_{t-1}(v^{t-1},\omega)&=g_t(v^{t-1},0,\omega).
\end{split}
\end{equation}
Much like Theorem~\ref{thm:dp} characterizes optimal primal solutions in terms of the dynamic programming equations \eqref{dp}, the following result characterizes optimal dual solutions in terms of the dual recursion \eqref{ddp}.

\begin{theorem}
Assume that the dual problem is proper and that there is a feasible $\bar x\in\N^\infty$ for the primal problem. Then the dual dynamic programming equations are well-defined and we have for every $v\in\N^\perp$ that
\begin{equation}\label{dopt1}
Eg_t(E_tv^t)\ge-\phi(0)\quad t=0,\ldots,T.
\end{equation}
In the absence of a duality gap, optimal dual solutions are characterized by having equalities in \eqref{dopt1} while $x\in\N$ and $v\in\N^\perp$ are primal and dual optimal, respectively, if and only if $Eg(x)<\infty$, $Eg^*(v)<\infty$ and
\[
Eg_t^*(x^t)+Eg_t(E_tv^t)=0\quad t=0,\dots,T,
\]
which is equivalent to having
\begin{align*}
E_tv^t&\in\partial g_t(x^t)\quad P\text{-a.s.}\quad t=0,\dots,T.
\end{align*}
\end{theorem}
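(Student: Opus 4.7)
The plan is to mirror the primal dynamic programming argument of Theorem~\ref{thm:dp} using the fact that the epigraphical conditional expectation ${^{\F_t}}$ and the integrand conditional expectation $E_t$ are conjugate operations in the sense of~\eqref{eq:depi}. A downward induction from $\tilde g_T=h^*$ yields the conjugate identifications $g_t^*=h_t$ and $\tilde g_t^*=\tilde h_t$ at every level at which the objects are well defined; in particular $g_T^*=h$. The second ingredient used throughout is that $v\in\N^\perp$ is equivalent to $E_tv_t=0$ for every $t$, whence $E_tv^t=(E_tv_0,\ldots,E_tv_{t-1},0)$. I will work under the standard convention $\F_T=\F$; the general case requires only a cosmetic adjustment at the top of the recursion.

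For well-definedness, I would proceed by downward induction on $t$. At $t=T$, dual properness provides $v^*\in\N^\perp$ with $Eh^*(v^*)<\infty$, giving an integrable selection of $\epi\tilde g_T=\epi h^*$; since $\F_T=\F$, $g_T=\tilde g_T=h^*$. For the inductive step, an integrable selection $(y,\alpha)$ of $\epi\tilde g_t$ produces $(E_ty,E_t\alpha)\in L^1(\epi g_t)$ by the defining property of the epigraphical conditional expectation; when $y$ is obtained by successively projecting a fixed $v^*\in\N^\perp$ from the top, the last coordinate $E_ty_t=E_tv^*_t=0$, so the selection takes the form $((u^{t-1},0),\alpha)$, which is an integrable selection of $\epi\tilde g_{t-1}$. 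The Hiai--Umegaki construction recalled before Lemma~\ref{lem:jensen} then produces $g_{t-1}$.

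For~\eqref{dopt1}, I would apply the pointwise Fenchel inequality $g_t(E_tv^t)+g_t^*(x^t)\ge x^t\cdot E_tv^t$ to any primal feasible $x\in\N^\infty$ and integrate; $x^t$ is $\F_t$-measurable and $v\in\N^\perp$ kills the bilinear term via $E[x^t\cdot v^t]=\sum_{s\le t}E[x_s\cdot E_sv_s]=0$, leaving $Eg_t(E_tv^t)\ge-Eg_t^*(x^t)$. Telescoping then uses $g_t^*=E_t\tilde g_t^*$ from~\eqref{eq:depi} together with the partial-conjugate identity $\tilde g_{t-1}^*(x^{t-1})=\inf_{x_t}g_t^*(x^{t-1},x_t)$ (obtained by conjugating $\tilde g_{t-1}(v)=g_t(v,0)$) to give $Eg_t^*(x^t)\le Eg_{t+1}^*(x^{t+1})\le\cdots\le Eg_T^*(x^T)=Eh(x)$. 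Infimising over primal feasible $x$ yields $Eg_t(E_tv^t)\ge-\phi(0)$. The dual-side monotonicity $Eg_{t-1}(E_{t-1}v^{t-1})\le Eg_t(E_tv^t)$ then follows from the epigraphical Jensen bound $g_{t-1}(E_{t-1}u)\le E_{t-1}\tilde g_{t-1}(u)$ applied at $u=E_tv^{t-1}$, the identity $\tilde g_{t-1}(E_tv^{t-1})=g_t(E_tv^t)$ (again using $E_tv_t=0$), and the tower property. Under no duality gap, $\inf_{\N^\perp}Eh^*=-\phi(0)$, so the sandwich $-\phi(0)\le Eg_t(E_tv^t)\le Eg_T(E_Tv^T)=Eh^*(v)$ collapses to equality throughout iff $v$ is dual optimal.

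Finally, the integrated Fenchel inequality $Eg_t^*(x^t)+Eg_t(E_tv^t)\ge 0$ becomes an equality precisely when the pointwise Fenchel equality $g_t^*(x^t)+g_t(E_tv^t)=x^t\cdot E_tv^t$ holds almost surely, which is the asserted subdifferential relation. At $t=T$ this reads $Eh(x)+Eh^*(v)=0$, exactly the primal--dual optimality criterion under no duality gap; the joint monotonicity of $t\mapsto Eg_t^*(x^t)$ and $t\mapsto Eg_t(E_tv^t)$ together with the nonnegativity of their sum propagates this equality from $t=T$ down to every $t$. The main obstacle will be the well-definedness step combined with verifying the partial-conjugate identity $\tilde g_{t-1}^*(x^{t-1})=\inf_{x_t}g_t^*(x^{t-1},x_t)$ that drives the telescoping: this requires lower semicontinuity/closedness of the slice $g_t^*(x^{t-1},\cdot)$, which should follow from the normal-integrand structure but merits careful verification.
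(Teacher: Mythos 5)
Your overall route is the paper's own: a downward induction pushing integrable selections through the epigraphical conditional expectation, two chains monotone in $t$ obtained from \eqref{eq:depi} and Lemma~\ref{lem:jensen}, the Fenchel inequality with vanishing bilinear term, and the sandwich/propagation argument for the optimality characterization. There is, however, a genuine sequencing gap in your well-definedness step. Your induction tracks only the dual-side selection: from $(y,\alpha)\in L^1(\epi\tilde g_t)$ you produce $(E_ty,E_t\alpha)\in L^1(\epi g_t)$ and hence a selection of $\epi\tilde g_{t-1}$. That does suffice (via Hiai--Umegaki/Truffert) for $g_t={^{\F_t}}\tilde g_t$ to exist as a normal integrand, but everything you use afterwards --- the conjugacy $g_t^*=E_t\tilde g_t^*$ from \eqref{eq:depi} and the Jensen bound $g_t(E_tu)\le E_t\tilde g_t(u)$ from Lemma~\ref{lem:jensen} --- requires \emph{in addition} the primal-side domain condition $\bar x^t\in\dom E\tilde g_t^*$; this is precisely where the hypothesis of a bounded feasible $\bar x\in\N^\infty$ enters, since Lemma~\ref{lem:jensen} applied to $f=\tilde g_t^*$ needs $\dom Ef\cap L^\infty(\F_t)\ne\emptyset$. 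In your write-up that condition is only delivered by the telescoping chain $E\tilde g_t^*(\bar x^t)\le Eg_{t+1}^*(\bar x^{t+1})\le\cdots\le Eh(\bar x)<\infty$, which itself already invokes \eqref{eq:depi} at level $t$ --- a circularity. The paper's proof resolves this by a single joint downward induction, establishing $E_{t+1}\bar v^t\in\dom E\tilde g_t$ and $\bar x^t\in\dom E\tilde g_t^*$ simultaneously, so that each level's pair of conditions licenses the applications of \eqref{eq:depi} and Lemma~\ref{lem:jensen} at the next. All your ingredients are present; they must be run as one induction rather than ``well-definedness first, telescoping later.''

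Two further points. Your closing worry about the partial-conjugate identity is unnecessary: conjugating $\tilde g_{t-1}(v^{t-1})=g_t(v^{t-1},0)$ gives $\tilde g_{t-1}^*=\cl\inf_{x_t}g_t^*(\cdot,x_t)$ \emph{with a closure}, and the telescoping only needs the one-sided inequality $\tilde g_{t-1}^*(x^{t-1})\le g_t^*(x^t)$, which holds trivially because the closure can only decrease the function; no lower semicontinuity of the slice need be verified (the paper uses exactly this one-sided bound). Relatedly, your opening identification $g_t^*=h_t$, $\tilde g_t^*=\tilde h_t$ is not available here: it would require the hypotheses of Theorem~\ref{thm:dp} plus closedness of the partial infima, neither of which the present theorem assumes; fortunately your argument never actually uses it, so the claim should simply be dropped. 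Finally, for the optimality characterization with general $x\in\N$ (not essentially bounded), $E[x^t\cdot(E_tv^t)]=0$ is not a naive interchange: integrability of the bilinear term must be extracted from that of $g_t^*(x^t)+g_t(E_tv^t)$ via the Fenchel bound, which is what the paper's citation of \cite[Lemma~1]{per14b} supplies; your term-by-term computation covers \eqref{dopt1} for bounded feasible strategies but not this last step.
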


\begin{proof}
Let $\bar v\in\N^\perp$ be feasible for the dual problem. We first show inductively that $E_{t+1} \bar v^t\in\dom E\tilde g_t$ and $\bar x^t \in\dom E\tilde g^*_t$ which implies, in particular, that each $g_{t}={^{\F_t}{\tilde g}}_t$ is well-defined. For $t=T$, this is trivial. Assume that the claim holds for some $t\le T$. Then, for every $v\in\N^\perp$, we have
\begin{align}
\tilde g_{t-1}(E_tv^{t-1})= g_t(E_tv^t)&\le E_t\tilde g_t(E_{t+1}v^t)= E_tg_{t+1}(E_{t+1}v^{t+1}),\label{d}
\end{align}
where the inequality follows from the induction hypotheses $\bar x^t \in\dom E\tilde g^*_t$ and Lemma~\ref{lem:jensen}. Thus $E_{t} \bar v^{t-1}\in\dom E\tilde g_{t-1}$. By definition, $\tilde g_{t-1}(v^{t-1})=g_t(v^{t-1},0)$, so $\tilde g^*_{t-1}(x^{t-1},\omega)=\cl\inf_{x_t}g^*_t(x^{t-1},x_t,\omega)$. By \eqref{eq:depi}, $g_t^*= E^{\F_t}(\tilde g_t^*)$. Thus, for every $x\in\N\cap\dom Eg$, we have
\begin{align}
\tilde g_{t-1}^*(x^{t-1})\le g_t^*(x^t)&\le E_tg_{t+1}^*(x^{t+1}).\label{p}
\end{align}
Thus $\bar x^{t-1} \in\dom E\tilde g^*_{t-1}$ which finishes the induction proof.

Let $x\in\dom Eg\cap \N$, and $v\in\dom Eg^*\cap\N^\perp$. Combining \eqref{d} and \eqref{p} with the fact that $g_0^*(x_0)\ge -g_0(0)$ gives
\begin{align}\label{pd}
Eg(v)\ge Eg_t(E_tv^t) \ge Eg_0(0) \ge -Eg_0^*(x_0) \ge -Eg_t^*(x^t) \ge - Eg^*(x) 
\end{align}
for all $t$. In particular, \eqref{dopt1} holds. In the absence of duality gap, \eqref{pd} also imply that optimal dual solutions are characterized by having inequalities in \eqref{dopt1}. Likewise, we get from \eqref{pd} that $x$ and $v$ are primal and dual optimal, respectively, if and only if 
\[
Eg_t^*(x^t)+Eg_t(E_tv^t)=0\quad t=0,\dots,T.
\]
By Fenchel's inequality, $g_t^*(x^t)+g_t(E_tv^t)\ge x^t\cdot (E_t v^t)$, so, by \cite[Lemma 1]{per14b}, $E [x^t\cdot (E_t v^t)]=0$ whenever the left side is integrable. Thus $Eg_t^*(x^t)+Eg_t(E_tv^t)=0$ is equivalent to having $g_t^*(x^t)+g_t(E_tv^t)= x^t\cdot (E_t v^t)$ almost surely, which means that
\[
E_tv^t\in\partial g_t(x^t)
\]
almost surely.
\end{proof}

\section{Appendix}

This appendix contains the proofs of Theorems~\ref{thm:spi0} and \ref{thm:rinte} as well as Theorem~\ref{thm:esd} below which was used in the proof of Theorem~\ref{thm:spi}. Both Theorem~\ref{thm:esd} and \ref{thm:rinte} are simple refinements of well-known results on convex integral functionals, both originally due to Terry Rockafellar.

\begin{theorem}\label{thm:esd}
Let $h$ be a convex normal integrand and $\bar z\in\dom Eh\cap L^\infty$. If $v\in(L^\infty)^*$ and $\epsilon\ge 0$ such that
\begin{equation}\label{app1}
Eh(z)\ge Eh(\bar z) + \langle z-\bar z,v\rangle-\epsilon\quad \forall z\in L^\infty,
\end{equation}
then
\[
Eh(z)\ge Eh(\bar z) + \langle z-\bar z,v^a\rangle-\epsilon\quad \forall z\in L^\infty,
\]
and
\[
0\ge\langle z-\bar z,v^s\rangle-\epsilon\quad\forall z\in\dom Eh.
\]
\end{theorem}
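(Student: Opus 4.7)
The plan is to exploit the Yosida--Hewitt decomposition directly, probing the hypothesis \eqref{app1} with two complementary cut-and-paste perturbations of $\bar z$ against the sequence $A^\nu\downto\emptyset$ on which $v^s$ is concentrated (so $\langle w,v^s\rangle = 0$ whenever $w\in L^\infty$ vanishes on $A^\nu$). A preliminary observation: we may assume $Eh(\bar z)$ is finite (else both conclusions are vacuous), so $h(\bar z,\cdot)\in L^1$; and for any $z\in L^\infty$ with $Eh(z)<\infty$, the hypothesis \eqref{app1} forces $Eh(z)>-\infty$ as well (since $v\in(L^\infty)^*$ and $z-\bar z\in L^\infty$), so $h(z,\cdot)\in L^1$. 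These bounds make the subsequent dominated convergence arguments automatic.

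For the first inequality, I would fix $z\in L^\infty$ with $Eh(z)<\infty$ (otherwise the claim is vacuous) and test \eqref{app1} against $z^\nu := \one_{(A^\nu)^c}z + \one_{A^\nu}\bar z \in L^\infty$. Since $z^\nu-\bar z = \one_{(A^\nu)^c}(z-\bar z)$ vanishes on $A^\nu$, the singular part drops out and $\langle z^\nu-\bar z,v\rangle = \langle z^\nu-\bar z,v^a\rangle$. Because $h(z^\nu,\cdot) = \one_{(A^\nu)^c}h(z,\cdot) + \one_{A^\nu}h(\bar z,\cdot)$, dominated convergence as $\nu\to\infty$ sends $Eh(z^\nu)\to Eh(z)$, while $\langle z^\nu-\bar z,v^a\rangle\to\langle z-\bar z,v^a\rangle$ by integrability of $v^a(z-\bar z)$. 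This delivers the $v^a$-inequality.

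For the second inequality, I would fix $z\in\dom Eh\cap L^\infty$ and test with the complementary perturbation $z^\nu := \bar z + \one_{A^\nu}(z-\bar z)\in L^\infty$. Here $(z-\bar z)-\one_{A^\nu}(z-\bar z) = \one_{(A^\nu)^c}(z-\bar z)$ vanishes on $A^\nu$, so the singular pairing is preserved: $\langle z^\nu-\bar z,v^s\rangle = \langle z-\bar z,v^s\rangle$, while $\langle z^\nu-\bar z,v^a\rangle = \langle \one_{A^\nu}(z-\bar z),v^a\rangle$. Substituting $h(z^\nu,\cdot) = \one_{A^\nu}h(z,\cdot) + \one_{(A^\nu)^c}h(\bar z,\cdot)$ into \eqref{app1} and rearranging yields
\[
E[\one_{A^\nu}(h(z)-h(\bar z))] \ge \langle \one_{A^\nu}(z-\bar z),v^a\rangle + \langle z-\bar z,v^s\rangle - \epsilon.
\]
Since $\one_{A^\nu}\to 0$ a.s., dominated convergence kills the left side and the $v^a$-term, leaving $0\ge\langle z-\bar z,v^s\rangle - \epsilon$.

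The only real step to verify is that the two perturbations truly separate the two components of $v$ in the desired way, and that enough $L^1$-integrability is available to pass to the limit; both are settled by the preliminary observation. Everything else is routine arithmetic with the complementary cut-and-paste.
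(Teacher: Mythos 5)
Your proof is correct and follows essentially the same route as the paper's: the same two complementary gluings $\one_{A^\nu}\bar z+\one_{\Omega\setminus A^\nu}z$ and $\one_{A^\nu}z+\one_{\Omega\setminus A^\nu}\bar z$ against the Yosida--Hewitt sets $A^\nu$, isolating $v^a$ and $v^s$ in turn. The only (harmless) difference is that you first extract $h(z,\cdot)\in L^1$ from \eqref{app1} and pass to the limit by dominated convergence, where the paper invokes Fatou's lemma with the upper bound $\max\{h(\bar z),h(z)\}$.
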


\begin{proof}
Let $z\in\dom Eh\cap L^\infty$ and define $z^\nu:= \one_{A^\nu}\bar z+\one_{\Omega\setminus A^\nu}z$ where $A^\nu$ are the sets in the characterization of the singular component $v^s$. We have $h(z^\nu)\to h(z)$ almost surely and $z^\nu\to z$ both weakly and almost surely. Thus, since $h(z^\nu)\le\max\{h(\bar z),h(z)\}$, Fatou's lemma and~\eqref{app1} give,
\begin{align*}
Eh(z)\ge\limsup Eh(z^\nu)&\ge Eh(\bar z)+\limsup\langle z^\nu-\bar z, v\rangle-\epsilon\\
&=Eh(\bar z)+\langle z-\bar z,v^a\rangle-\epsilon,
\end{align*}
where the equality holds since $z^\nu-\bar z= \one_{\Omega\setminus A^\nu}(z-\bar z)$, so that
\[
\langle z^\nu-\bar z,v\rangle = \langle z^\nu-\bar z,v^a\rangle \to \langle z-\bar z,v^a\rangle.
\]
Now let $z^\nu:= \one_{A^\nu}{z} + \one_{\Omega\setminus A^\nu}\bar z$. We have that $h(z^\nu)\to h(\bar z)$ almost surely and $z^\nu\to\bar z$ both weakly and almost surely. Since $h(z^\nu)\le\max\{h(z),h(\bar z)\}$, Fatou's lemma and \eqref{app1} give,
\begin{align*}
Eh(\bar z)\ge\limsup Eh(z^\nu)&\ge Eh(\bar z)+\limsup\langle z^\nu-\bar z, v\rangle-\epsilon\\
&=Eh(\bar z)+\langle z-\bar z,v^s\rangle-\epsilon,
\end{align*}
where the equality holds since $z^\nu-\bar z=\one_{A^\nu}(z-\bar z)$ so that
\begin{align*}
\langle z^\nu-\bar z, v\rangle = \langle z^\nu-\bar z,v^a\rangle+\langle z^\nu-\bar z,v^s\rangle\to \langle z-\bar z,v^s\rangle
\end{align*}
which completes the proof.
\end{proof}

\noindent
{\em Proof of Theorem~\ref{thm:spi0}}.
Let $D:=\{x\in\N\,|\,\exists z\in L^\infty:\ Eh(x+z)<\infty\}$. By the interchange rule~\cite[Theorem~14.60]{rw98},
\begin{align*}
\phi^*(v)&=\sup_{z\in L^\infty}\{\langle z,v\rangle-\phi(z)\}\\
&=\sup_{x\in\N}\sup_{z\in L^\infty}E[z\cdot v-h(x+z)]\\
&=\sup_{x\in D} E[x\cdot v-h^*(v)].
\end{align*}
Since $\dom Eh\cap L^\infty\ne\emptyset$ implies $\N^\infty\subseteq D$, we have $\phi^*(v)=+\infty$ for $v\notin\N^\perp$. By the Fenchel inequality, $h(x)+h^*(v)\ge x\cdot v$ for all $x,v\in\reals^n$, so \cite[Lemma~1]{per14b} implies $E(x\cdot v)=0$ for every $x\in D$ and $v\in\N^\perp\cap\dom Eh^*$. The second claim follows from the first one by noting that $v\in\partial\phi(0)$ if and only if $-\phi^*(v)=\phi(0)$. Finally, $x\in\N$ and $v\in\N^\perp$ are optimal with $Eh(x)+Eh^*(v)=0$, if and only if $Eh(x)<\infty$, $Eh^*(v)<\infty$ and the above Fenchel inequality holds almost surely as an equality, or equivalently, $v\in\partial h(x)$ almost surely. 
\hfill$\square$
\newline

\noindent
{\em Proof of Theorem~\ref{thm:rinte}}.
Translating, if necessary, we may assume $0\in\D$ so that $L^\infty(\aff\dom h)\subseteq\cup_{\lambda>0}\lambda\D\subseteq\aff\D$. By assumption, $\D\subseteq\dom Eh\cap L^\infty\subseteq L^\infty(\dom h)\subseteq L^\infty(\aff\dom h)$. Thus, $\aff\D=\aff(\dom Eh\cap L^\infty)=\aff L^\infty(\dom h)=L^\infty(\aff\dom h)$ which is a closed set. The above also implies $\rinte_s\D\subseteq\rinte_s\dom Eh\subseteq\rinte_s L^\infty(\dom h)$. Clearly $\rinte_s L^\infty(\dom h)\subseteq\D$ while $\rinte_s\D=\D$. It remains to prove that $Eh$ is closed and proper.

Let $\bar r>0$ be such that $\uball (0,\bar r)\cap \aff \dom h\subseteq \rinte_s\dom h$ almost surely and let $\pi(\omega)$ be the projection from $\reals^d$ to $\aff\dom h(\cdot,\omega)$. There exist $x^i\in\reals^d$, $i=0,\dots,d$ and $r>0$ such that $|x^i|<\bar r$ and $\uball(0,r)$ belongs to the interior of the convex hull of $\{x^i \mid i=0,\dots,d\}$. By \cite[Exercise~14.17]{rw98}, $\pi x$ is measurable for every measurable $x$, so each $\pi x^i$ belongs to $\D$ and thus,
\[
\alpha:=\max_{i=0,\dots,d}h(\pi x^i)
\]
is integrable. 
Since $0\in\rinte\dom h$ almost surely, the closed convex-valued mapping 
\[
\Gamma(\omega)=\partial h(0,\omega)\cap\aff\dom h(\cdot,\omega)
\]
is nonempty-valued and measurable. Indeed, the measurability follows from \cite[Proposition~14.11 and Theorem~14.56]{rw98}, and nonemptiness follows from \cite[Theorem~23.4]{roc70a} and the simple fact that $\pi(\partial h)\subseteq \partial h$. By \cite[Corollary 14.6]{rw98}, there exists $y\in L^0(\Gamma)$. By the definition of subdifferential,
\begin{align*}
y(\omega)\cdot x\le h(x,\omega)-h(0,\omega)
\end{align*}
for all $x\in\reals^d$, and, in particular, $h^*(y)\le -h(0)$. Therefore,
\begin{align*}
r|y(\omega)| &=\sup_{x\in\uball(0,r)}\{y(\omega)\cdot x\}\\
&=\sup_{x\in\uball(0,r)}\{y(\omega)\cdot \pi(\omega) x\}\\
&\le \sup_{x\in\uball(0,r)} h(\pi(\omega)x,\omega)-h(0,\omega)\\
&\le \alpha(\omega)-h(0,\omega),
\end{align*}
where the second equality holds since $y(\omega)\in\aff\dom h(\cdot,\omega)$ almost surely. Thus, $y\in L^1$ and $y\in\dom Eh^*$ so, by \cite[Theorem~2]{roc68}, $Eh$ is closed and proper.
\hfill$\square$

\bibliographystyle{plain}
\bibliography{sp}

\end{document}